\documentclass[12pt,reqno]{amsart}
    \makeatletter
    \renewcommand{\thepart}{\Roman{part}}
    \renewcommand\part{
    \clearpage \vspace*{-0.5cm} 
    \if@noskipsec \leavevmode \fi
    \@afterindentfalse
    \secdef\@part\@spart}
    \def\@part[#1]#2{%
    \ifnum \c@secnumdepth >\m@ne
      \refstepcounter{part}%
      \addcontentsline{toc}{part}{\thepart\hspace{5pt}#1}%
    \else
     \addcontentsline{toc}{part}{#1}%
    \fi
    {\parindent \z@ \centering
    \interlinepenalty \@M 
    \ifnum \c@secnumdepth >\m@ne
     \large\scshape \partname\nobreakspace\thepart:  #2 \par
    \else
     \Large\scshape \underline{#2}\par
    \fi
    } \nobreak \vskip 0.5cm \@afterheading}
    \makeatother
\usepackage{amsmath, amsthm, amscd, amsfonts, amssymb, bbm, graphicx, color, mathabx, tikz, caption, enumitem, mathtools, comment, pgfplots}
    \pgfplotsset{compat=1.16}
    \usepgfplotslibrary{fillbetween}
    \makeatletter
    \def\l@subsection{\@tocline{2}{0pt}{2.9pc}{5pc}{}}
    \def\l@subsubsection{\@tocline{2}{0pt}{5pc}{7.5pc}{}}
    \makeatother
\usepackage[margin=2.8cm]{geometry}
\usepackage[alphabetic]{amsrefs}

\usepackage[bookmarksnumbered, colorlinks, plainpages]{hyperref}
    \hypersetup{colorlinks=true, linkcolor=teal, citecolor=purple, filecolor=cyan, urlcolor=cyan}
\allowdisplaybreaks
    \mathtoolsset{showonlyrefs,showmanualtags} 
    
\numberwithin{equation}{section}

\newtheorem{mainthm}{Theorem}
    
\newtheorem{lemma}[equation]{Lemma}
\newtheorem{prop}[equation]{Proposition}
\newtheorem{mainprop}[mainthm]{Proposition}
    
\newtheorem{crlr}[equation]{Corollary}

\theoremstyle{definition}

\theoremstyle{remark}
\newtheorem{rmk}[equation]{Remark}
\newtheorem{example}[equation]{Example}

\newcommand{\N}{\mathbb N}
\newcommand{\R}{\mathbb R}
\newcommand{\Z}{\mathbb Z}

\newcommand{\T}{\mathbb T}
\renewcommand{\b}{\mathbf }

\newcommand{\btheta}{\boldsymbol\theta}
\newcommand{\bxi}{\boldsymbol\xi}

\renewcommand{\hat}{\widehat}

\newcommand{\one}{\mathbbm{1}}

\DeclareMathOperator{\dist}{dist}
\DeclareMathOperator{\lcm}{lcm}
\begin{document}
\title{Sharp Thresholds for Distance Patterns in Random Subsets of $\Z^d$}
\subjclass[2020]{Primary 11B30; Secondary 05D40, 52C10, 05D10}

\author[Giannitsi]{Christina Giannitsi}
    \address{Department of Mathematics, Virginia Tech, Blacksburg, VA, USA}
    \email {cgiannitsi@vt.edu}
\author[Palsson]{Eyvindur Ari Palsson}
    \address{Department of Mathematics, Virginia Tech, Blacksburg, VA, USA}
    \email {palsson@vt.edu}
\begin{abstract}
Let $d \geq 5$, $0 < \gamma < d - 2$, and $\Omega_N$ be the binomial random subset of $Q_N = [-N,N]^d \cap \Z^d$ with retention probability $p_N = N^{-\gamma}$.
We prove that, with failure probability of optimal exponential order, every subset $B \subseteq \Omega_N$ of fixed positive relative density realizes, at each scale $p_N^{-2/(d - 2)} \lesssim \lambda \lesssim N^2$, a squared distance of the form $q^2 \lambda$, where $q$ belongs to a fixed finite set depending only on the dimension and the density. The lower scale $p_N^{-2/(d - 2)}$ is sharp. As a consequence,the squared-distance set $D^2(B)$ of $B$ has maximal order $N^2$ and contains affine copies of every fixed finite subset of $\Z$. 

The main new input is a finite multidilate supersaturation theorem for dense subsets of $Q_N$, which, together with boundedness estimates for the associated spherical distance graphs down to the sharp scale, allows us to apply Schacht's transference theorem.
\end{abstract}
\maketitle
\tableofcontents

\section{Introduction}

A classic result in Ramsey theory is van der Waerden's theorem \cite{Waerden27}, which states that for all $k,r\in\mathbb{N}$, there exists $n$ such that every $r$-coloring of $1,2,\ldots,n$ contains a monochromatic $k$-term arithmetic progression. This was extended by Roth \cite{Roth53} who showed the existence of three-term arithmetic progressions in any subset of the integers of positive upper density and then further pushed to existence of arithmetic progressions of arbitrary length in the celebrated work by Szemer\'{e}di \cite{Szemeredi75}.

In geometric Ramsey theory, the patterns one seeks come from geometric considerations. A classic question, posed by Sz\'{e}kely \cite{Szekely83}, asks whether all sufficiently large distances can always be realized between pairs of points from any given measurable subset of $\mathbb{R}^2$ of positive upper Banach density. This was answered in the affirmative by Furstenberg, Katznelson and Weiss \cite{FKW90} using ergodic theory. Alternative proofs were given by Bourgain \cite{Bourgain86} using harmonic analysis, Falconer and Marstrand \cite{FM86} using more direct geometric arguments and more recently by Quas \cite{Quas09} using combinatorial methods. This result has been extended in many ways, notably by Bourgain \cite{Bourgain86} to pinned distances and simplices in the appropriate dimension. This line of research has seen a lot of work, particularly in the last decade; see for instance results on distances \cites{LM20b,W2025}, rectangles, simplices and their products \cites{HLM2017,LM18,DK21,DK22,LM22,DS25}, arithmetic progressions using $l^p$ metrics \cite{CMP17} and general finite configurations, either approximately \cite{Ziegler06} or under stronger density hypotheses \cite{FKY22}.

Geometric Ramsey theory questions can be asked in any setting with interesting geometry and, given the motivation from arithmetic progressions, it is natural to ask whether some suitable analogue of Sz\'{e}kely's question holds true in $\Z^d$, $d \geq 2$. Magyar established such a result for distances in \cite{M2008} and subsequently extended it to simplices in \cite{M2009}, using deep results from number theory on quadratic forms \cites{Kitaoka86,Siegel44}. Later, Lyall and Magyar \cite{LM2020} gave a new proof of Magyar's distance result and extended their methods to pinned distances and arbitrary finite trees. For an earlier unpinned tree result, see Bulinski \cite{Bulinski18}. Particularly relevant for our purposes is a finite quantitative result underlying their argument. For a dense set $A \subseteq Q_N$ satisfying an appropriate uniformity condition at an appropriate lattice scale, they show that, throughout a substantial range of $\lambda$, the normalized number of pairs $\b x,\b y \in A$ satisfying $|\b x-\b y|^2=\lambda$ is equal, up to a small error, to the square of the density of $A$. 
For infinite sets of positive upper Banach density, their density-increment argument reduces to a related uniformity condition on residue classes. Returning from the resulting affine sublattice to the original lattice introduces the dilation in their general distance theorem. Recently, Lott, Magyar and Ponagandla \cite{LMP26} obtained substantially stronger quantitative density bounds for similar copies of arbitrary nondegenerate integral simplices in sufficiently high dimensions, improving the quantitative bounds from Magyar's earlier work.

A newer direction in Ramsey theory involves proving so-called sparse random analogues of various well-known theorems. In the context of the theorems of Roth and Szemer\'{e}di, a classic setup is to consider a probabilistic model, where within $1, 2, \ldots, N$ each number is retained with some probability $p_N$ and then ask whether, with high probability as $N \rightarrow \infty$, every subset of fixed positive relative density in the resulting random set contains an arithmetic progression of the prescribed length. For Roth's theorem a sparse random analogue was obtained by Kohayakawa, {\L}uczak and R\"{o}dl \cite{KLR96}, who obtained a sharp threshold in the probability $p_N$. Along the way to proving their classic result on the existence of $k$-term arithmetic progressions in the primes, Green and Tao \cite{GT08} developed a transference argument showing that suitably pseudorandom sets satisfy a relative version of Szemer\'{e}di's theorem. The resulting threshold for random sets was, however, far from what one would expect to be sharp. For the true random model, the question of $k$-term arithmetic progressions was resolved by Schacht \cite{Schacht16} and, independently, Conlon and Gowers \cite{CG16}. Both obtained the sharp threshold on the probability $p_N$. More recently, Friedgut, Kuperwasser, Samotij and Schacht \cite{FKSS26} established sharp-threshold results for a broad class of Ramsey properties in random induced hypergraphs.

To our knowledge, sparse random analogues of this type have not previously been established for prescribed squared-distance scales in $\Z^d$. The goal of this paper is to develop such an analogue for the distance results of Magyar \cite{M2008} and Lyall and Magyar \cite{LM2020}. The finite results of Lyall and Magyar and the sparse transference theory of Schacht \cite{Schacht16} provide the two natural starting points, but substantial new input is needed to bring these theories together. The compact Lyall-Magyar argument gives quantitative spherical counts under a strong uniformity hypothesis, while Schacht's transference theorem requires a robust supersaturation statement for every sufficiently dense subset of the ambient set, together with suitable boundedness estimates for the associated configuration graphs. Our main deterministic input is a finite multidilate supersaturation theorem that provides such a statement for arbitrary dense subsets of $Q_N$. The appearance of several dilations is forced by congruence obstructions: no single dilation can satisfy the required counting estimate uniformly over all dense sets and scales. We then show that the resulting spherical distance graphs satisfy the boundedness conditions needed for Schacht's theorem uniformly throughout the relevant range of scales. In this way, we place the arithmetic distance problem into a form compatible with sharp sparse transference, bringing together the dense geometric framework of Lyall and Magyar with Schacht's probabilistic theory. The lower scale $p_N^{-2/(d-2)}$ is natural: since $|S_\lambda| \simeq_d \lambda^{(d-2)/2}$, it is precisely the scale at which $p_N |S_\lambda|$ becomes of constant order, and we show that the conclusion can fail below this scale. The exponential order of the failure probability is also optimal: it is $\exp\big(-c p_N |Q_N|\big)$ up to constants in the exponent, matching the probability that the random set $\Omega_N$ itself is empty.

\subsection{Our Main Results}
With probability at least $1-o(1)$ for all sufficiently large $N$, the random set $\Omega_N$ has the following resilience property. Every subset $B\subseteq\Omega_N$ containing at least a fixed proportion $\delta$ of the points of $\Omega_N$ realizes distances at every admissible scale. More precisely, there is a fixed finite collection of dilation factors, depending only on $d$ and $\delta$, such that for each admissible integer $\lambda$, the squared-distance set of $B$ contains $q^2\lambda$ for at least one dilation factor $q$ from this collection. The factor $q$ may depend on both $B$ and $\lambda$; the theorem does not assert that one fixed factor works for all $\lambda$.

This conclusion is analogous to the random version of Szemerédi's theorem: above the natural threshold, with high probability every subset of fixed positive relative density inside a binomial random subset of $[N]$ contains a nontrivial arithmetic progression of the prescribed length; see \cite{Schacht16} and, independently, \cite{CG16}. In the present setting, the relevant configuration is a pair of points at a prescribed squared-distance scale.

To formally state our theorem we need the following notation. For $N\in\N$ and $\lambda\in\N$, let $Q_N=[-N,N]^d\cap \Z^d$, and $S_\lambda=\{\b{v}\in \Z^d: |\b{v}|^2=\lambda\}$.
For $q\in\N$ and $f:\Z^d\to\R$, define the dilated spherical average
\begin{align}
    A_\lambda^{(q)}f(\b x )=\frac{1}{|S_\lambda|}\sum_{\b{v}\in S_\lambda} f(\b x +q\b{v}).
\end{align}
Finally, let $D^2(B)=\{|\b x -\b y |^2:\b x ,\b y \in B\}$ denote the squared-distance set of a set $B\subseteq \Z^d$.

\begin{mainthm}\label{mainthm:multi-count}
Let $d \geq 5$, let $0 < \gamma < d - 2$, and put $p_N = N^{-\gamma}$. Let $\Omega_N \subseteq Q_N$ be the binomial random subset obtained by retaining each point of $Q_N$ independently with probability $p_N$. For every $0 < \delta < 1$, there exist integers $s > r \geq 2$ and $J \geq 0$ depending only on $d$ and $\delta$, a finite set
\begin{align}
\mathcal Q
    &= \left\{ r^j s^{J-j} : 0 \leq j \leq J \right\},
\end{align}
and constants $c_0 = c_0(d,\delta) > 0$, $c_1 = c_1(d,\delta) > 0$, $c_2 = c_2(d,\delta) > 0$, and $\mathfrak c = \mathfrak c(d,\delta,\gamma) > 0$ such that, for all sufficiently large $N$, with probability at least $1 - \exp\left(-\mathfrak c N^{d-\gamma}\right)$, every subset $B \subseteq \Omega_N$ satisfying $|B| \geq \delta |\Omega_N|$ has the following property: for every integer $\lambda$ satisfying
\begin{align}
c_1 p_N^{-2/(d-2)}
    & \leq \lambda
    \leq c_2 N^2,
\end{align}
we have
\begin{align}
\frac{1}{|Q_N|} \sum_{\b{x} \in \Z^d} \one_B(\b{x}) \sum_{j=0}^{J} A_{s^{2(J-j)}\lambda}^{(r^j)} \one_B(\b{x})
    &\geq c_0 p_N^2.
\label{eq:thm-multi-count}
\end{align}
Moreover, if $\rho_N$ denotes the probability that the preceding conclusion fails, then 
$|\log \rho_N| \simeq_{d,\delta,\gamma} N^{d-\gamma},$
so the failure-probability estimate is optimal at the exponential scale.
\end{mainthm}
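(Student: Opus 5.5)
The plan is to deduce Theorem~\ref{mainthm:multi-count} from Schacht's transference theorem \cite{Schacht16}, applied separately to each scale $\lambda$ in the admissible range, with a union bound over the at most $c_2N^2$ scales. Since the failure probability per scale is $\exp(-cN^{d-\gamma})$, this union bound costs only a polynomial factor.

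\textbf{Setup.} For fixed $\lambda$, consider the $2$-uniform hypergraph $H_\lambda$ on vertex set $Q_N$. Its edges are the pairs $\{\b x,\b x+r^j\b v\}$ with $\b v\in S_{s^{2(J-j)}\lambda}$ and $0\le j\le J$, each edge weighted by $1/|S_{s^{2(J-j)}\lambda}|$. Then the left side of \eqref{eq:thm-multi-count} is a normalized edge count of $H_\lambda$ inside $B$. Schacht's theorem needs two inputs.

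\textbf{First input: dense multidilate supersaturation.} We need the following deterministic statement. For every $\delta>0$ there exist $r<s$, $J$ and $c>0$ such that every $A\subseteq Q_N$ with $|A|\ge\delta|Q_N|$ satisfies
\[
\frac{1}{|Q_N|}\sum_{\b x}\one_A(\b x)\sum_{j}A^{(r^j)}_{s^{2(J-j)}\lambda}\one_A(\b x)\ge c
\]
uniformly for $1\le\lambda\le c_2N^2$. I would prove this via Lyall--Magyar \cite{LM2020}. Decompose $Q_N$ into cosets of $q\Z^d$ and use an energy (density) increment. If $\one_A$ is not uniform at lattice scale $q$, pass to a progression of the form $\b a+q\Z^d$ on which $A$ is denser. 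This can happen only $O_\delta(1)$ times, so there is a terminal lattice scale. The family $\{r^js^{J-j}\}$ is chosen so that every possible terminal modulus is accommodated by some $j$. The point of using two coprime bases is that the congruence conditions relating $q^2\lambda$ to the sphere's local densities (Siegel-type factors, $d\ge5$) are satisfied for at least one $j$. The terminal step then gives the spherical count $\approx$ (density)$^2$ by the Lyall--Magyar uniform estimate.

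The lower range of $\lambda$, where the Lyall--Magyar error term is not small, must be handled separately. Since $\lambda\ge c_1p_N^{-2/(d-2)}\to\infty$, I will take $\lambda$ large and, if necessary, restrict to a sub-box.

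\textbf{Second input: boundedness.} Schacht requires, for the random model, that the expected number of edges through any fixed vertex-set be controlled: $p_N|S_\mu|\gtrsim1$ for each $\mu=s^{2(J-j)}\lambda$. Since $|S_\mu|\simeq\mu^{(d-2)/2}$ for $d\ge5$, this is exactly the condition $\lambda\ge c_1p_N^{-2/(d-2)}$. It also needs that the edge counts are concentrated and evenly spread, i.e.\ each vertex has degree $\simeq|S_\mu|$, for $\lambda\le c_2N^2$. With these two inputs, Schacht's theorem yields probability $1-\exp(-\mathfrak cp_N|Q_N|)$ that every $B$ with $|B|\ge\delta|\Omega_N|$ has count $\ge c_0p_N^2$. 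The argument also has to use $|\Omega_N|\simeq p_N|Q_N|$, which holds with a Chernoff bound of the same exponential order.

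\textbf{Optimality of the failure probability.} The lower bound on $|\log\rho_N|$ is the claim just established. For the matching upper bound, note that the event $\Omega_N=\emptyset$ has probability $(1-p_N)^{|Q_N|}=\exp(-O(p_N|Q_N|))$. On this event, the taken $B=\emptyset$ vacuously has $|B|\ge\delta|\Omega_N|$, yet \eqref{eq:thm-multi-count} fails.

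\textbf{Main obstacle.} I expect the hardest step to be the first input: making the multidilate supersaturation hold uniformly in $\lambda$, down to scales that are only polynomially large, with a fixed finite set of dilations that handles all congruence obstructions. Verifying Schacht's hypotheses uniformly across scales is the secondary difficulty.
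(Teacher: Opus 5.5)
Your transference step and your optimality argument match the paper. The shared ingredients are:
\begin{itemize}
\item one graph per scale, built from the $J+1$ dilated-sphere relations (drop your weights: Schacht's theorem is unweighted, and the sizes $|S_{s^{2(J-j)}\lambda}|$ are comparable across $j$);
\item boundedness reducing to $p_N|S_\lambda|\gtrsim1$, so that the diagonal term $\theta|E|$ is absorbed;
\item divergence $p_N^2|E(H_{\lambda,N})|\gtrsim N^{d-\gamma}$;
\item a union bound over $O(N^2)$ graphs, which needs Schacht's constants to be uniform over the family, as checked in Lemma~\ref{lm:quant-schacht-graphs};
\item Chernoff for $|\Omega_N|\simeq p_N|Q_N|$;
\item $B=\Omega_N=\varnothing$ for the upper bound on $|\log\rho_N|$.
\end{itemize}
The genuine gap is in your first input, which is the paper's main new ingredient. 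There the mechanism you describe does not work.

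\textbf{Why global coset increments fail.} The Lyall--Magyar estimate (Proposition~\ref{prop:gvn}) bounds the count at scale $\mu$ by $\|f_A\|_{U^1(q_\eta,L)}$ with $L\le\eta^2\sqrt\mu$, inside a box of side at least $\eta^{-2}\sqrt\mu$. So it needs equidistribution on cosets of $q_\eta\Z^d$ \emph{within sub-boxes of side about $\eta^2\sqrt\mu$}; coset uniformity over the whole box is not enough. For example, let $A$ equal $\{\b{x}: x_1+\dots+x_d\ \text{even}\}$ on $\{x_1<0\}$ and its complement on $\{x_1\ge0\}$. Then $A$ has density $1/2+o(1)$ on every coset of every fixed $m\Z^d$. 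Yet for odd $\mu=o(N^2)$ it has only $O(\sqrt\mu\,N^{d-1}|S_\mu|)$ pairs with difference in $S_\mu$, since $\sum_i v_i\equiv|\b{v}|^2\pmod 2$. Hence a coset-uniform terminal set need not have count close to its density squared.

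When the correct local uniformity fails, the increment lands on $\b{a}+q_\eta\Z^d$ intersected with a box of side about $\eta^2\sqrt\mu$. This rescales to a cube of side about $\eta^2\sqrt\mu/q_\eta$, which is smaller than the radius of the sphere being counted. So the iteration cannot continue at the same scale; it must drop to a scale smaller by a fixed factor.

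\textbf{Where the family $r^js^{J-j}$ actually comes from.} The shrinking of scales, not arithmetic, produces the family. In the paper:
\begin{itemize}
\item $r=q_\eta$ is the same modulus at every step, and $s$ is any integer with $2rK_0\le cs$ (so $s$ is simply large compared with $r$);
\item one starts in a cube of side $\asymp s^J\sqrt\lambda$ at scale $s^{2J}\lambda$;
\item after $j$ increments one counts at scale $s^{2(J-j)}\lambda$ with dilation $r^j$;
\item the density cap $\rho(1+\tau)^{J+1}>1$ forces termination (Lemma~\ref{lm:dense-lm-local});
\item averaging over translates then gives Proposition~\ref{prop:dense-lm-supersat}.
\end{itemize}

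Your rationale via coprime bases and Siegel local densities is not the operative mechanism. For $d\ge5$ the spheres have no local obstruction: $|S_\mu|\simeq_d\mu^{(d-2)/2}$ for every $\mu$, and Lemma~\ref{lm:magyar} treats all large $\lambda$ uniformly. The obstruction in Example~\ref{ex:obstruction} is congruence structure of the \emph{set}, which is exactly what the increment detects. No coprimality of $r,s$ and no congruence condition on $\lambda$ is used. Without this scale-shrinking bookkeeping, your first input does not close.
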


\begin{crlr}\label{maincrlr:multi-distances}
For all sufficiently large $N$, with probability at least $1-\exp(-\mathfrak c N^{d-\gamma})$, every subset $B\subseteq\Omega_N$ satisfying $|B|\geq\delta|\Omega_N|$ satisfies
\begin{align}
D^2(B)\cap \left\{
    r^{2j}s^{2(J-j)}\lambda:
    0\leq j\leq J \right\}
\neq\varnothing
\end{align}
for every integer $\lambda$ in the above range, or equivalently, $D^2(B)\cap\left\{q^2\lambda:q\in\mathcal Q\right\}\neq\varnothing.$
\end{crlr}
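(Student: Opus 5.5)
This follows directly from Theorem~\ref{mainthm:multi-count}; we work on the event where its conclusion holds, which has probability at least $1-\exp(-\mathfrak c N^{d-\gamma})$. Fix $B\subseteq\Omega_N$ with $|B|\ge\delta|\Omega_N|$ and an integer $\lambda$ with $c_1p_N^{-2/(d-2)}\le\lambda\le c_2N^2$. By \eqref{eq:thm-multi-count}, the left-hand side is at least $c_0p_N^2>0$, since $p_N=N^{-\gamma}>0$.

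Each summand on the left is nonnegative, because $\one_B\ge 0$ and $A^{(q)}_\mu$ averages nonnegative values. A sum of finitely many nonnegative terms can be positive only if some term is positive. Hence there exist $\b x\in\Z^d$ and an index $0\le j\le J$ with
\[
\one_B(\b x)\,A^{(r^j)}_{s^{2(J-j)}\lambda}\one_B(\b x)>0 .
\]
(Implicitly $S_{s^{2(J-j)}\lambda}\neq\varnothing$ here. This holds automatically for $d\ge5$ since every positive integer is a sum of four squares, and in any case it is forced by positivity of the average.) Consequently $\b x\in B$, and some $\b v\in S_{s^{2(J-j)}\lambda}$ has $\b y:=\b x+r^j\b v\in B$.

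We then compute
\[
|\b x-\b y|^2=r^{2j}|\b v|^2=r^{2j}s^{2(J-j)}\lambda=(r^js^{J-j})^2\lambda .
\]
So $D^2(B)$ contains $q^2\lambda$ with $q=r^js^{J-j}\in\mathcal Q$. This gives the first formulation, and the two formulations are identical by the definition of $\mathcal Q$.

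There is no real obstacle. The only points to check are the nonnegativity of each term and the fact that $p_N>0$ makes the lower bound strictly positive. Note that $\b x\ne\b y$ whenever $\lambda\ge1$, so the distance realized is nontrivial.
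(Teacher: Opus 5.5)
Your proposal is correct and matches the paper's proof: the paper also works on the event of Theorem~\ref{mainthm:multi-count}, uses positivity of $c_0p_N^2$ and nonnegativity of the summands to extract $\b x\in B$ and $\b v\in S_{s^{2(J-j)}\lambda}$ with $\b x+r^j\b v\in B$, and then computes $|\b x-\b y|^2=(r^js^{J-j})^2\lambda$. Your side remarks on nonemptiness of the sphere and on $\b x\neq\b y$ are accurate but not needed.
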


For any sequence of positive integers satisfying $\lambda_N = o\big(p_N^{-2/(d - 2)}\big)$, the pairs at the finitely many squared distances $q^2 \lambda_N$, $q \in \mathcal Q$, can, with high probability, all be destroyed by removing a vanishing proportion of $\Omega_N$. The following proposition therefore shows that the power-law dependence in the lower endpoint of Theorem \ref{mainthm:multi-count} is sharp.

\begin{mainprop}\label{prop:lower-sharp}
Under the hypotheses of Theorem \ref{mainthm:multi-count}, let $r$, $s$, $J$, and $\mathcal Q$ be the parameters supplied there. Let $(\lambda_N)_{N \in \N}$ be any sequence of positive integers satisfying $\lambda_N = o\big(p_N^{-2/(d - 2)}\big)$. Then there exists a deterministic sequence $\varepsilon_N \to 0$ such that, with probability tending to $1$, there is a set $B_N \subseteq \Omega_N$ satisfying $|B_N| \geq (1 - \varepsilon_N)|\Omega_N|$ and
\begin{align}
D^2(B_N) \cap \left\{ q^2 \lambda_N : q \in \mathcal Q \right\}
    &= \varnothing.
\end{align}
\end{mainprop}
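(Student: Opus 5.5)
The approach is a first-moment deletion argument. Fix the finite set $\mathcal Q$ from Theorem \ref{mainthm:multi-count}, and let $X_N$ be the number of unordered pairs $\{\b x,\b y\}\subseteq\Omega_N$ with $|\b x-\b y|^2\in\{q^2\lambda_N : q\in\mathcal Q\}$. Let $B_N$ be $\Omega_N$ with one endpoint of each such pair removed. By construction $D^2(B_N)$ avoids every $q^2\lambda_N$, and $|B_N|\geq|\Omega_N|-X_N$. It therefore suffices to show that $X_N=o(|\Omega_N|)$ with probability tending to one, with an explicit deterministic rate $\varepsilon_N$.

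\emph{Expectation of $X_N$.} Every such pair has $\b y=\b x+\b w$ with $\b w\in S_{q^2\lambda_N}$. Hence
\begin{align}
\mathbb E X_N \leq p_N^2\,|Q_N|\sum_{q\in\mathcal Q}|S_{q^2\lambda_N}|.
\end{align}
For $d\geq5$ we have the standard bound $|S_m|\lesssim_d m^{(d-2)/2}$ for all $m\geq1$; this is the upper half of the asymptotic $|S_\lambda|\simeq_d\lambda^{(d-2)/2}$ used in the introduction. Since $\mathcal Q$ is finite and depends only on $d,\delta$, this gives
\begin{align}
\mathbb E X_N\lesssim_{d,\delta} p_N|Q_N|\cdot p_N\lambda_N^{(d-2)/2}=p_N|Q_N|\cdot\eta_N,
\end{align}
where $\eta_N:=p_N\lambda_N^{(d-2)/2}\to0$ precisely because $\lambda_N=o(p_N^{-2/(d-2)})$. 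Note that $\eta_N$ is deterministic.

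\emph{Concentration of $|\Omega_N|$.} We have $|\Omega_N|\sim\mathrm{Bin}(|Q_N|,p_N)$ with mean $p_N|Q_N|\simeq N^{d-\gamma}\to\infty$, because $\gamma<d-2<d$. By Chernoff, $|\Omega_N|\geq\frac12p_N|Q_N|$ with probability $1-o(1)$.

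\emph{Conclusion.} Set $\varepsilon_N:=\max(\eta_N^{1/2},N^{-1})$, which is deterministic and tends to $0$. Markov's inequality gives
\begin{align}
\mathbb P\big(X_N>\tfrac12\varepsilon_N p_N|Q_N|\big)\lesssim \eta_N/\varepsilon_N\leq\eta_N^{1/2}\to0.
\end{align}
On the intersection of these two events, $X_N\leq\varepsilon_N|\Omega_N|$, so $|B_N|\geq(1-\varepsilon_N)|\Omega_N|$ and the required avoidance holds.

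The only input that needs care is the uniform upper bound $|S_m|\lesssim_d m^{(d-2)/2}$ for every $m$, not just on average. This is where $d\geq5$ enters: the singular series is bounded above uniformly, although it is not bounded away from zero for all $m$, which is irrelevant here. If one prefers to avoid this, $\lambda_N$ bounded is trivial, and for general $\lambda_N$ one can cite the Hardy–Littlewood asymptotic for $r_d(m)$ with $d\geq5$.

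A minor point is the case $S_{q^2\lambda_N}=\varnothing$ for some $q$, which only helps. If $\lambda_N$ is so small that $q^2\lambda_N$ exceeds no lattice constraint, the same bound still holds since pairs must also lie in $Q_N$; the bound is only an overcount.

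Otherwise the argument is routine. No second moment is needed, since Markov suffices once $\varepsilon_N$ is chosen to decay more slowly than $\eta_N$.
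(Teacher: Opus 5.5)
Your proposal is correct and is essentially the paper's own argument: you bound $\mathbb E X_N \lesssim p_N|Q_N|\cdot p_N\lambda_N^{(d-2)/2}$ using the uniform upper bound on $|S_m|$, apply Markov at a threshold of order $\bigl(p_N\lambda_N^{(d-2)/2}\bigr)^{1/2}p_N|Q_N|$, intersect with the Chernoff event $|\Omega_N|\geq \frac12 p_N|Q_N|$, and delete one endpoint of each offending pair. One aside is inaccurate but does not affect the proof: for $d\geq5$ the singular series is also bounded below, which is why the paper can state $|S_\lambda|\simeq_d\lambda^{(d-2)/2}$, and in any case only the upper bound is needed.
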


Corollary \ref{maincrlr:multi-distances} assigns to each admissible integer $\lambda$ a dilation $q_\lambda \in\mathcal Q$ such that $q_\lambda ^2\lambda\in D^2(B)$, thereby defining a finite coloring of the admissible interval. Van der Waerden's theorem then supplies a sufficiently long monochromatic arithmetic progression, from which we obtain an affine copy $a+tF\subseteq D^2(B)$ of every fixed finite set $F\subseteq\Z$. Specifically,

\begin{mainthm}\label{mainthm:sparse-random-patterns}
Under the hypotheses of Theorem \ref{mainthm:multi-count}, for every fixed finite set $F\subseteq\Z$ with $|F|\geq2$ and all sufficiently large $N$, with probability at least $1-\exp(-\mathfrak c N^{d-\gamma})$, every subset $B\subseteq\Omega_N$ satisfying $|B|\geq\delta|\Omega_N|$ has the property that there exist $a\in\Z$ and $t\in\N$ such that
\begin{align}
a+tF\subseteq D^2(B).
\end{align}
\end{mainthm}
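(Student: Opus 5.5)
Theorem \ref{mainthm:sparse-random-patterns} follows from Corollary \ref{maincrlr:multi-distances} together with van der Waerden's theorem; no further probabilistic input is needed. We work on the event of probability at least $1-\exp(-\mathfrak c N^{d-\gamma})$ given by Corollary \ref{maincrlr:multi-distances}, so every $B\subseteq\Omega_N$ with $|B|\geq\delta|\Omega_N|$ is covered at once. For such a $B$, colour each integer $\lambda$ in the admissible interval $I_N=[c_1p_N^{-2/(d-2)},\,c_2N^2]$ by the least index $j\in\{0,\dots,J\}$ with $r^{2j}s^{2(J-j)}\lambda\in D^2(B)$. The corollary guarantees this colour exists, so we obtain a $(J+1)$-colouring of $I_N\cap\Z$. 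The number of colours depends only on $d,\delta$.

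Fix the finite set $F$ and reduce to arithmetic progressions. Translating $F$ changes only $a$, so we may assume $F\subseteq\{0,1,\dots,L-1\}$ with $L=\max F-\min F+1\ge 2$. Let $W=W(L,J+1)$ be the van der Waerden number for $(J+1)$-colourings and $L$-term progressions. Since $\gamma<d-2$, we have $p_N^{-2/(d-2)}=N^{2\gamma/(d-2)}$ with $2\gamma/(d-2)<2$. Hence $|I_N\cap\Z|\to\infty$, and for $N$ large $I_N$ contains $W$ consecutive integers. Van der Waerden's theorem then gives a monochromatic progression $\lambda_0,\lambda_0+t',\dots,\lambda_0+(L-1)t'$ with $t'\ge1$, in some colour $j$. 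Put $q=r^js^{J-j}$. Then $q^2(\lambda_0+it')\in D^2(B)$ for $0\le i\le L-1$. Taking $a=q^2\lambda_0$ and $t=q^2t'\in\N$ gives $a+tF\subseteq a+t\{0,\dots,L-1\}\subseteq D^2(B)$. Undoing the translation shifts $a$ by $-t\min F$, which is still an integer.

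The only point requiring care is uniformity. The progression and the colour may depend on $B$. However, the event on which the argument runs comes from Corollary \ref{maincrlr:multi-distances} alone, and the threshold on $N$ (large enough that $|I_N|\ge W$) depends only on $d,\delta,\gamma,F$. The probability bound therefore transfers unchanged. I see no real obstacle: the step that needs checking is that $I_N$ is long enough, and this is exactly where the hypothesis $\gamma<d-2$ is used.
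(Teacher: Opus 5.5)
Your proposal is correct and matches the paper's proof: colour each admissible $\lambda$ by a dilation $q\in\mathcal Q$ with $q^2\lambda\in D^2(B)$, apply van der Waerden's theorem once $|I_N|\geq W$ (which is where $\gamma<d-2$ is used), and rescale by $q^2$ and translate by $-t\min F$. The only cosmetic difference is that you index the colours by $j\in\{0,\dots,J\}$ rather than by $q\in\mathcal Q$; this changes nothing, since $r^js^{J-j}$ is strictly decreasing in $j$ and so $|\mathcal Q|=J+1$.
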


Taking $F=\{0,1,\ldots,k-1\}$ in Theorem \ref{mainthm:sparse-random-patterns} immediately shows that $D^2(B)$ contains at least one nontrivial arithmetic progression of length $k$. We are, however, able to make a stronger, quantitative statement about the number of arithmetic progressions, using pigeonholing and Varnavides' averaging form applied to  Szemer\'edi's theorem \cites{Varnavides59, Szemeredi75}.

\begin{mainthm}\label{mainthm:sparse-random-aps}
Under the hypotheses of Theorem \ref{mainthm:multi-count}, for every fixed integer $k\geq2$, there exists a constant $c_k =c_k (d,\delta)>0$ such that, for all sufficiently large $N$, with probability at least $1-\exp(-\mathfrak c N^{d-\gamma})$, every subset $B\subseteq\Omega_N$ with $|B|\geq\delta|\Omega_N|$ has the property that
\begin{align}
\Big| \big\{ (a,b) \in\Z^2 \,:\, a\geq 0, \; b\geq1\text{ and } a+tb\in D^2(B)\text{ for every }0\leq t\leq k-1 \big\}\Big|
\geq c_k N^4.
\end{align}
In particular, $D^2(B)$ contains at least $c_k N^4$ distinct nontrivial arithmetic progressions of length $k$.
\end{mainthm}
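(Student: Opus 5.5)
We deduce Theorem \ref{mainthm:sparse-random-aps} deterministically from Corollary \ref{maincrlr:multi-distances}. We work on the event of probability at least $1-\exp(-\mathfrak c N^{d-\gamma})$ supplied there, and fix any $B\subseteq\Omega_N$ with $|B|\ge\delta|\Omega_N|$.

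Let $\Lambda_N=\{\lambda\in\Z: c_1p_N^{-2/(d-2)}\le\lambda\le c_2N^2\}$. Since $\gamma<d-2$, we have $p_N^{-2/(d-2)}=N^{2\gamma/(d-2)}=o(N^2)$. Hence $|\Lambda_N|\ge \tfrac{c_2}{2}N^2$ for large $N$, and $\Lambda_N$ contains the interval $I_N=[\lceil c_2N^2/2\rceil, c_2N^2]\cap\Z$. Corollary \ref{maincrlr:multi-distances} gives a map $\lambda\mapsto j(\lambda)\in\{0,\dots,J\}$ with $r^{2j(\lambda)}s^{2(J-j(\lambda))}\lambda\in D^2(B)$ for every $\lambda\in I_N$. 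By pigeonholing there is a single $j_0$ and a set $L\subseteq I_N$ with $|L|\ge |I_N|/(J+1)\ge c' N^2$, where $c'=c'(d,\delta)>0$ (recall $J$ depends only on $d,\delta$). Put $m=r^{2j_0}s^{2(J-j_0)}$, a fixed integer bounded in terms of $d,\delta$. Then $mL\subseteq D^2(B)$.

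Next apply Varnavides' theorem to $L$, which has relative density at least $\eta=2c'/c_2$ inside the interval $I_N$ of length $M\simeq N^2$. It gives at least $c(k,\eta)M^2$ pairs $(a',b')$ with $b'\ge1$ and $a'+tb'\in L$ for all $0\le t\le k-1$. These pairs are distinct $k$-term progressions in $L$ (for $k\ge2$, a pair is determined by its progression). Each such pair maps to $(a,b)=(ma',mb')$, with $a\ge0$, $b\ge1$, and $a+tb=m(a'+tb')\in mL\subseteq D^2(B)$. The map $(a',b')\mapsto(ma',mb')$ is injective, so we obtain at least $c(k,\eta)M^2\ge c_kN^4$ admissible pairs, with $c_k$ depending only on $k,d,\delta$. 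Distinct pairs with $b\ge1$ give distinct nontrivial progressions, which is the final assertion.

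The only delicate point is uniformity. The constants $c_1,c_2,J,r,s$ from Theorem \ref{mainthm:multi-count} depend only on $d,\delta$, so $m$, $\eta$ and hence $c_k$ do not depend on $N$ or $B$. Varnavides' bound applies once $M$ exceeds a threshold depending on $k,\eta$, and this holds for all sufficiently large $N$. The case $k=2$ is trivial, since there are already $\gtrsim |L|^2$ pairs in $L$. We also restrict to the upper half $I_N$ of $\Lambda_N$ so that the lower endpoint $c_1p_N^{-2/(d-2)}$ does not affect the density count.
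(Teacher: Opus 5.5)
Your proposal is correct and follows the paper's proof. On the event of Corollary \ref{maincrlr:multi-distances}, you pigeonhole a single dilation $q_B\in\mathcal Q$ realizing at least a $1/|\mathcal Q|$ proportion of the admissible scales. You then apply the Varnavides form of Szemer\'edi's theorem to that set of scales and push the resulting pairs forward by the injective map $(u,h)\mapsto(q_B^2u,q_B^2h)$, exactly as the paper does.

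The differences are cosmetic. The paper derives the Varnavides averaging bound explicitly from Szemer\'edi's theorem instead of citing it. It also works with the full admissible interval, whose length is already $\gtrsim N^2$, rather than restricting to the upper half; that restriction is harmless but unnecessary.
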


The pigeonholing argument used to prove Theorem \ref{mainthm:sparse-random-aps} also shows that, for each fixed $B$, one dilation $q_B \in \mathcal Q$ realizes a positive proportion of all admissible scales, yielding the following maximal-order estimate for $D^2(B)$.

\begin{crlr}\label{maincrlr:prevalent-dilation}
Under the hypotheses of Theorem \ref{mainthm:multi-count}, let
\begin{align}
I_N
    &= \left\{ \lambda \in \N : c_1 p_N^{-2/(d - 2)} \leq \lambda \leq c_2 N^2 \right\}.
\end{align}
For all sufficiently large $N$, with probability at least $1 - \exp(-\mathfrak c N^{d-\gamma})$, every subset $B \subseteq \Omega_N$ satisfying $|B| \geq \delta |\Omega_N|$ admits a dilation $q_B \in \mathcal Q$ such that
\begin{align}
\left| \left\{ \lambda \in I_N : q_B^2 \lambda \in D^2(B) \right\} \right|
    &\geq \frac{|I_N|}{|\mathcal Q|}.
\end{align}
In particular, $|D^2(B)| \simeq_{d,\delta} N^2.$
\end{crlr}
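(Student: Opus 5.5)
We prove Corollary \ref{maincrlr:prevalent-dilation} on the event of Theorem \ref{mainthm:multi-count}. That event has probability at least $1-\exp(-\mathfrak c N^{d-\gamma})$, so no further probabilistic input is needed. The argument is a pigeonhole step followed by elementary counting.

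\textbf{Coloring step.} Fix $B\subseteq\Omega_N$ with $|B|\geq\delta|\Omega_N|$. For each $\lambda\in I_N$, inequality \eqref{eq:thm-multi-count} says the left-hand side is positive. Hence at least one summand $\sum_{\b x}\one_B(\b x)A^{(r^j)}_{s^{2(J-j)}\lambda}\one_B(\b x)$ is positive. This gives some $\b x\in B$ and $\b v\in S_{s^{2(J-j)}\lambda}$ with $\b x+r^j\b v\in B$. Then
\[
|r^j\b v|^2=r^{2j}s^{2(J-j)}\lambda=q^2\lambda, \qquad q=r^js^{J-j}\in\mathcal Q ,
\]
so $q^2\lambda\in D^2(B)$; this is exactly Corollary \ref{maincrlr:multi-distances}. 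Choose such a $q_\lambda$ for each $\lambda$. This defines a map $I_N\to\mathcal Q$, and by pigeonhole some fiber has size at least $|I_N|/|\mathcal Q|$. Call the corresponding value $q_B$; it gives the displayed inequality.

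\textbf{Maximal order.} For the upper bound, $B\subseteq Q_N$, so every squared distance is an integer in $[0,dN^2\cdot 4]$. Hence $|D^2(B)|\leq 4dN^2+1$.

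For the lower bound, the map $\lambda\mapsto q_B^2\lambda$ is injective. So $|D^2(B)|\geq |I_N|/|\mathcal Q|$. Since $\gamma<d-2$, we have $p_N^{-2/(d-2)}=N^{2\gamma/(d-2)}=o(N^2)$. Therefore, for large $N$, $|I_N|\geq c_2N^2-c_1N^{2\gamma/(d-2)}-1\geq \tfrac{c_2}{2}N^2$. Because $|\mathcal Q|\le J+1$ depends only on $d,\delta$, this gives $|D^2(B)|\gtrsim_{d,\delta}N^2$.

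There is no real obstacle here. The only points needing care are, first, that a positive average forces an actual pair; this holds because all terms are nonnegative. Second, the interval $I_N$ must have length of order $N^2$, which is precisely where the hypothesis $\gamma<d-2$ is used.
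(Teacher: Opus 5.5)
Your proposal is correct and follows essentially the same route as the paper. Both arguments work on the event of Theorem~\ref{mainthm:multi-count} (equivalently Corollary~\ref{maincrlr:multi-distances}) and pigeonhole a dilation over $I_N$. They then use injectivity of $\lambda\mapsto q_B^2\lambda$, the bound $|I_N|\gtrsim N^2$ coming from $\gamma<d-2$, and the trivial bound $|D^2(B)|\le 4dN^2+1$. The only cosmetic difference is that you pigeonhole over the fibers of a chosen coloring $\lambda\mapsto q_\lambda$, whereas the paper pigeonholes over the covering sets $I_{N,q}(B)$; this gives the same bound.
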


\subsection{Proof Outline}

The proof has two main components. We first establish a deterministic supersaturation result for dense subsets of $Q_N$, and then transfer it to subsets of positive relative density inside the sparse random set $\Omega_N$ using the graph case of Schacht's theorem \cite{Schacht16}.

\emph{The density argument.}
Proposition \ref{prop:gvn} gives a generalized von Neumann estimate for spherical averages when the balance function of a set is sufficiently uniform. Because a single prescribed dilation is obstructed by congruence phenomena, as shown in Example \ref{ex:obstruction}, we work with a finite family of dilations. If the required uniformity fails, Lemma \ref{lm:density-incr} produces a density increment on a smaller affine copy of the lattice. Iterating this dichotomy over finitely many scales gives the local counting statement in Lemma \ref{lm:dense-lm-local}. Averaging the local count over translates then yields the global supersaturation estimate in Proposition \ref{prop:dense-lm-supersat}.

\emph{Sparse transference.}
For each admissible $\lambda$, the relevant spherical pairs are encoded by a graph on $Q_N$. Proposition \ref{prop:dense-lm-supersat} verifies the density condition required by Schacht's theorem, while Lemma \ref{lm:graph-bddness} and Corollary \ref{cor:graph-bddness-cnst} provide the necessary second-moment bounds. Since $|S_\lambda|\simeq_d\lambda^{(d-2)/2}$, the diagonal term is controlled once $\lambda\gtrsim p_N^{-2/(d-2)}$, after choosing the implicit constant sufficiently large. At this same endpoint, Schacht's divergence quantity is bounded below by a fixed multiple of $p_N|Q_N|\simeq_d N^{d-\gamma}$ and therefore tends to infinity. The family contains only $O(N^2)$ graphs, so the corresponding union bound does not introduce any logarithmic loss. Lemma \ref{lm:quant-schacht-graphs} consequently gives Proposition \ref{prop:sparse-rel} throughout the stated power-law range.

\emph{Completion and consequences.}
A Chernoff bound shows that $|\Omega_N|$ is comparable to $p_N|Q_N|$, so Proposition \ref{prop:sparse-rel} immediately implies Theorem \ref{mainthm:multi-count} and Corollary \ref{maincrlr:multi-distances}. 
Coloring each admissible $\lambda$ by a dilation $q\in\mathcal Q$ for which $q^2\lambda\in D^2(B)$ and applying van der Waerden's theorem gives Theorem \ref{mainthm:sparse-random-patterns}. Pigeonholing one dilation and applying the Varnavides averaging form of Szemerédi's theorem \cites{Varnavides59,Szemeredi75} gives the quantitative arithmetic-progression conclusion in Theorem \ref{mainthm:sparse-random-aps}. The same pigeonholing step yields Corollary \ref{maincrlr:prevalent-dilation}, showing that one dilation realizes a positive proportion of the admissible scales and, consequently, that $|D^2(B)|\simeq_{d,\delta}N^2$.

\emph{Sharpness.}
When $\lambda_N=o\big(p_N^{-2/(d-2)}\big)$, the expected number of pairs in $\Omega_N$ at the finitely many relevant distance scales is $o(p_N|Q_N|)$. Removing one endpoint from each such pair leaves, with high probability, a subset of density $1-o(1)$ containing none of those distances, proving Proposition \ref{prop:lower-sharp}. Finally, the event $\Omega_N=\varnothing$ has probability $\exp\big(-(2^d+o(1))N^{d-\gamma}\big)$ and forces the conclusion of Theorem \ref{mainthm:multi-count} to fail. Together with the transference upper bound, this gives the optimal exponential scale of the failure probability.

\bigskip
\section{Preliminaries}
\subsection{Notation}
We use the standard notation $a\lesssim b$ to imply that there exists $C>0$ such that $a\leq Cb$. When $C$ depends on a parameter $\kappa$, we shall write $a\lesssim_\kappa b$. Similarly, we write $a\simeq b$ when $a\lesssim b$ and $b\lesssim a$, simultaneously.

Let $\sigma_\lambda=|S_\lambda|^{-1}\one_{S_\lambda}$ denote the normalized discrete spherical measure. 
For a cube $[-L/2,L/2]^d$ define the normalized indicator in scale $R$ as
\begin{align}
    \chi_{R,L}(\b x )=
        \left(\dfrac{R}{L}\right)^d \one_{(R\Z)^d\cap[-L/2,L/2]^d} (\b x).
\end{align}

For a fixed $\eta \in (0,1)$, define $q_\eta=\lcm\{1\leq q\leq c\eta^{-2}\}$ for some positive constant $c$ that depends on the individual problem.

\subsection{Auxiliary Results}

Magyar, Stein and Wainger \cite{MSW2002} developed the Fourier-analytic theory of discrete spherical averages in dimensions $d\geq5$, using the circle method to decompose the discrete spherical measure into contributions localized near rational frequencies together with an error term satisfying strong decay estimates. Their work has become a fundamental tool in the study of discrete spherical averages and is closely related to the Fourier-analytic input needed here. For our purposes, it is more convenient to use the following related pointwise estimate of Magyar \cite{M2008}, which directly captures the decay of the Fourier transform of the spherical measure away from rational frequencies with bounded denominator.

\begin{lemma}\label{lm:magyar}
Let $d\geq5$ and $0<\eta<1$. There exists a constant $C=C(d)>0$ such that, if
$q_\eta=\lcm\{1\leq q\leq C\eta^{-2}\}$, $\lambda\geq C\eta^{-4}$, and $\btheta\in\T^d$ satisfies
\begin{align}
\dist_{\T^d}\left(\btheta,q_\eta^{-1}\Z^d\right)
    &> \eta^{-1/2}\lambda^{-1/2},
\end{align}
then $|\hat\sigma_\lambda(\btheta)| \leq\eta.$
\end{lemma}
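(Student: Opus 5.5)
The plan is to derive this from the circle-method approximation of Magyar, Stein and Wainger \cite{MSW2002} for the Fourier transform of the discrete spherical measure. Write $e(t)=e^{2\pi i t}$ and let $\psi$ be a smooth bump supported near the origin, with $\psi\equiv 1$ on a smaller neighbourhood.

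\emph{Step 1: the approximation.} Normalize by $|S_\lambda|\simeq_d\lambda^{(d-2)/2}$, valid for $d\geq5$. The expansion to use is
\begin{align}
\hat\sigma_\lambda(\btheta)
  = \sum_{q\geq1}\sum_{\substack{a\in(\Z/q\Z)^\times}}\sum_{\b l\in\Z^d} e(-\lambda a/q)\, G(a/q,\b l)\, \psi\bigl(q(\btheta-\b l/q)\bigr)\, \hat{d\sigma}\bigl(\lambda^{1/2}(\btheta-\b l/q)\bigr) + E_\lambda(\btheta).
\end{align}
Here $G(a/q,\b l)=q^{-d}\sum_{\b x\in(\Z/q\Z)^d}e\bigl((a|\b x|^2+\b l\cdot\b x)/q\bigr)$ is the normalized Gauss sum, and $\hat{d\sigma}$ is the Fourier transform of the normalized surface measure on the unit sphere. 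The target error bound is $|E_\lambda|\lesssim_d\lambda^{-\epsilon_d}$ with an explicit $\epsilon_d$; the Kloosterman refinement gives roughly $\lambda^{-1/4}$ once $d\geq5$. Because $\lambda\geq C\eta^{-4}$, taking $C$ large then gives $|E_\lambda|\leq\eta/3$.

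\emph{Step 2: large denominators.} Split the main term at $Q=C'\eta^{-2}$. For each $q$, the cutoffs $\psi(q(\cdot-\b l/q))$ allow only $O(1)$ values of $\b l$ to contribute at a given $\btheta$. We also have $|G|\lesssim q^{-d/2}$ and $|\hat{d\sigma}|\leq1$. Hence the $q>Q$ part is bounded by
\begin{align}
\sum_{q>Q} q\cdot q^{-d/2}\lesssim Q^{2-d/2}\leq Q^{-1/2}\lesssim \eta/C'^{1/2},
\end{align}
using $d\geq5$. This is at most $\eta/3$ for $C'$ large.

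\emph{Step 3: small denominators.} For $q\leq Q$ we have $q\mid q_\eta$ provided $C\geq C'$, so $\b l/q\in q_\eta^{-1}\Z^d$. The hypothesis then gives $|\btheta-\b l/q|>\eta^{-1/2}\lambda^{-1/2}$, and so the argument of $\hat{d\sigma}$ has modulus larger than $\eta^{-1/2}$. Stationary phase gives $|\hat{d\sigma}(\bxi)|\lesssim|\bxi|^{-(d-1)/2}$, hence each such term is $\lesssim\eta^{(d-1)/4}\leq\eta^{1}$ when $d\geq5$. Summing against the convergent series $\sum_q q^{1-d/2}$ bounds this part by $K_d\,\eta$.

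\emph{Main obstacle and fallback.} A bound $K_d\,\eta$ with an absolute but possibly large $K_d$ is not yet enough, since we need $\eta/3$. I would handle this by applying the whole argument with $\eta$ replaced by $\eta/K$. That only changes the constant $C$ in $q_\eta$, in $\lambda\geq C\eta^{-4}$, and in the separation threshold. If the separation in the hypothesis has to stay exactly $\eta^{-1/2}\lambda^{-1/2}$, the decay exponent $(d-1)/4\geq1$ still leaves room: for $d\geq6$ there is extra room outright. In the borderline case $d=5$ the exponent is exactly $1$, so I would instead sum over $q$ more carefully to absorb $K_d$ into $C$.

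The genuinely delicate input, and the step I expect to be the main obstacle, is the uniform error estimate in Step 1 with an explicit power of $\lambda$. For this I would quote Magyar's version \cite{M2008} of the Magyar–Stein–Wainger decomposition rather than reprove the Kloosterman-sum bounds.
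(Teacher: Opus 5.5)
\textbf{Verdict: genuine gap in the last step, and that step cannot be repaired for the statement as written.} The paper gives no proof of this lemma. It imports the estimate from Magyar \cite{M2008}, whose argument runs through the same Magyar--Stein--Wainger expansion. So your Steps 1--3 are the right route, and they do establish $|\hat\sigma_\lambda(\btheta)|\le(\tfrac23+K_d)\eta$.

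The gap is the step where you try to turn $K_d\eta$ into $\eta$.
\begin{itemize}
\item Replacing $\eta$ by $\eta/K$ changes the separation to $K^{1/2}\eta^{-1/2}\lambda^{-1/2}$. The statement fixes the separation at $\eta^{-1/2}\lambda^{-1/2}$, so this is not allowed.
\item Enlarging $C$ does not help. $C$ only decides which denominators enter $q_\eta$ and how large $\lambda$ must be. The dominant contribution comes from the smallest denominators (already $q=1$), and its size at the threshold is $|\widehat{d\sigma}(\bxi)|$ for $|\bxi|$ just above $\eta^{-1/2}$. That quantity does not depend on $C$. So ``summing over $q$ more carefully to absorb $K_d$ into $C$'' cannot work for $d=5$.
\item The claim that $d\ge 6$ has room to spare holds only for $\eta\le(3K_d)^{-4/(d-5)}$, not for all $0<\eta<1$.
\end{itemize}

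In fact the literal statement, with constant exactly $1$, is false in high dimension. Fix $\eta$ and $C$, take $\btheta=t\lambda^{-1/2}\b e_1$ with $t>\eta^{-1/2}$, and let $\lambda\to\infty$.
\begin{itemize}
\item Once $t\lambda^{-1/2}<1/(2q_\eta)$, the nearest point of $q_\eta^{-1}\Z^d$ to $\btheta$ is $\b 0$, so the separation hypothesis holds.
\item Equidistribution of $\lambda^{-1/2}S_\lambda$ on the unit sphere, which your own expansion also yields, gives $\hat\sigma_\lambda(\btheta)\to\widehat{d\sigma}(t\b e_1)=\sum_{k\ge0}(-\pi^2t^2)^k/(k!\,(d/2)_k)$.
\item For $d=100$, $\eta=1/2$, $t=3/2$ this limit is about $0.64$, which exceeds $\eta$.
\end{itemize}

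What your argument actually proves is the correct form of Magyar's estimate: $|\hat\sigma_\lambda(\btheta)|\le K_d\eta$. Equivalently, the stated conclusion holds under a separation $A_d\,\eta^{-1/2}\lambda^{-1/2}$ with $A_d$ depending on $d$ and $C$ enlarged. This weaker form is all that Proposition~\ref{prop:gvn} uses, since only an $O_{d,r}(\eta^2)$ error is needed there.

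For small $d$ the constant $1$ may still be attainable. For $d=5$, $|\widehat{d\sigma}(\bxi)|\le 0.09|\bxi|^{-2}$ when $|\bxi|\ge1$. But reaching it would require grouping all terms attached to the same rational point into a twisted singular-series factor of modulus at most $1+o(1)$. The triangle inequality over $q$ with Gauss-sum bounds will not do it.
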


We shall also need an adapted generalized von Neumann inequality for spherical averages, following an approach similar to \cite{LM2020}.

\begin{prop}\label{prop:gvn}
Let $d\geq 5$ and let $0<\eta<1$. There is a constant $c_0=c_0(d)\geq 1$ with the following property. Let $r,L,N,\lambda\in\N$, set $R=rq_\eta$, with $q_\eta=\lcm\{1\leq q\leq c_0\eta^{-2}\}$, and assume $1\leq R\leq \eta^2L\leq L\leq N$ and $\eta^{-4}L^2\leq \lambda\leq \eta^4N^2$. Let $\chi_{R,L}(\b x )$ be the normalized indicator of $[-L/2,L/2]^d$ in scale $R$.
For $f:Q_N\to\R$, define the relative uniformity norm as
\begin{align}
\|f\|_{U^1(R,L)} 
    & :=\left(\frac{1}{|Q_N|}\sum_{\b t \in\Z^d}|f*\chi_{R,L}(\b t )|^2\right)^{1/2}.
\end{align}
Then for all functions $f_0,f_1:Q_N\to[-1,1]$ we have
\begin{align}
    \left|\frac{1}{|Q_N|}\sum_{\b x \in\Z^d}f_0(\b x )A_\lambda^{(r)}f_1(\b x )\right|\leq \|f_1\|_{U^1(R,L)}+O_{d,r}(\eta).
\end{align}
\end{prop}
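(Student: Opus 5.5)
The plan is to work on the Fourier side and use Plancherel. Write the left-hand side as
\begin{align}
\frac{1}{|Q_N|}\sum_{\b x}f_0(\b x)\,(f_1*\sigma_\lambda^{(r)})(\b x),
\end{align}
where $\sigma^{(r)}_\lambda$ is the push-forward of $\sigma_\lambda$ under $\b v\mapsto r\b v$. Its Fourier transform is $\hat\sigma^{(r)}_\lambda(\btheta)=\hat\sigma_\lambda(r\btheta)$.

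The first step is to insert the smoothing kernel. Split
\begin{align}
f_1*\sigma^{(r)}_\lambda = (f_1*\chi_{R,L})*\sigma^{(r)}_\lambda + (f_1 - f_1*\chi_{R,L})*\sigma^{(r)}_\lambda .
\end{align}
For the main term, use Cauchy--Schwarz together with the facts that $|f_0|\le 1$ on $Q_N$ and that convolution with the probability measure $\sigma^{(r)}_\lambda$ is an $\ell^2$ contraction. This bounds the main term by
\begin{align}
\Big(\tfrac{1}{|Q_N|}\sum|f_1*\chi_{R,L}|^2\Big)^{1/2}=\|f_1\|_{U^1(R,L)}.
\end{align}
Since $f_1*\chi_{R,L}$ is supported in a slightly enlarged cube of size $N+L\le 2N$, the normalization only changes constants.

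It remains to show that the error term is $O_{d,r}(\eta)$. By Cauchy--Schwarz and Parseval, it is at most
\begin{align}
|Q_N|^{-1/2}\Big(\int_{\T^d}|\hat f_1(\btheta)|^2\,|1-\hat\chi_{R,L}(\btheta)|^2\,|\hat\sigma_\lambda(r\btheta)|^2\,d\btheta\Big)^{1/2}.
\end{align}
Since $\|f_1\|_2^2\le|Q_N|$, it suffices to prove the pointwise multiplier bound
\begin{align}
|1-\hat\chi_{R,L}(\btheta)|\,|\hat\sigma_\lambda(r\btheta)|\lesssim_{d,r}\eta \quad\text{for all }\btheta.
\end{align}
I split $\T^d$ into three regions. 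Below, $\hat\chi_{R,L}$ is a normalized Dirichlet-type kernel on $(R\Z)^d\cap[-L/2,L/2]^d$ that is $R^{-1}\Z^d$-periodic, with $\hat\chi_{R,L}(\b a/R)=1$.

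\emph{Region (i): $r\btheta$ is far from $q_\eta^{-1}\Z^d$,} meaning $\dist(r\btheta,q_\eta^{-1}\Z^d)>\eta^{-1/2}\lambda^{-1/2}$. Lemma \ref{lm:magyar} applies, since $\lambda\ge\eta^{-4}L^2\ge C\eta^{-4}$. It gives $|\hat\sigma_\lambda(r\btheta)|\le\eta$, and $|1-\hat\chi|\le 2$.

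\emph{Region (ii): $r\btheta$ is close to some $\b a/q_\eta$.} Then $\btheta$ lies within $\eta^{-1/2}\lambda^{-1/2}/r$ of a point of $(rq_\eta)^{-1}\Z^d=R^{-1}\Z^d$, modulo the $r$-fold ambiguity. Write $\btheta=\b a'/R+\b\beta$ with $|\b\beta|\le\eta^{-1/2}\lambda^{-1/2}$. Periodicity gives $\hat\chi_{R,L}(\btheta)=\hat\chi_{R,L}(\b\beta)$. The kernel has support of diameter about $L$, so $|1-\hat\chi_{R,L}(\b\beta)|\lesssim_d L|\b\beta|\le L\eta^{-1/2}\lambda^{-1/2}$. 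The hypothesis $\lambda\ge\eta^{-4}L^2$ then gives $L\lambda^{-1/2}\le\eta^2$, so this term is $\lesssim\eta^{3/2}\le\eta$.

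\emph{Small-frequency / continuous approximation.} The hypothesis $R\le\eta^2 L$ is needed to justify the first-order bound above: $\hat\chi$ is a lattice average whose Taylor expansion at $\b\beta$ is controlled by the cube size $L$, and $R\le\eta^2L$ ensures the lattice sums approximate integrals. The upper bound $\lambda\le\eta^4N^2$ handles boundary effects: translating by $r\b v$ with $|r\b v|\le r\eta^2N$ moves only an $O_{d,r}(\eta^2)$ fraction of $Q_N$ outside $Q_{N}$. This is where the constants depend on $r$, and it also justifies restricting sums from $\Z^d$ to $Q_N$.

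The main obstacle I expect is the bookkeeping in Region (ii): converting closeness of $r\btheta$ to $q_\eta^{-1}\Z^d$ into closeness of $\btheta$ to $R^{-1}\Z^d$ at scale $\eta^{-1/2}\lambda^{-1/2}$, and matching the constant $c_0$ in $q_\eta$ with the constant $C(d)$ of Lemma \ref{lm:magyar}. Concretely, I will choose $c_0\ge C(d)$. Then $\lcm\{q\le C\eta^{-2}\}$ divides $q_\eta$, so the exceptional set of Lemma \ref{lm:magyar} is contained in the neighbourhood of $q_\eta^{-1}\Z^d$ used here. Collecting the three regions bounds the error term by $O_{d,r}(\eta)$, which proves the claim.
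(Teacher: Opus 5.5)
Your argument is correct, but it inserts the smoothing at a different point than the paper does. The paper applies Cauchy--Schwarz to the whole form at once, which reduces matters to $\frac{1}{|Q_N|}\int_{\T^d}|\hat f_1(\bxi)|^2|\hat\sigma_\lambda(r\bxi)|^2\,d\bxi$. It then proves the pointwise domination $|\hat\sigma_\lambda(r\bxi)|^2\le|\hat\chi_{R,L}(\bxi)|^2+O(\eta^2)$. On the major arcs this needs the lower bound $|\hat\chi_{R,L}|\ge 1-O(\eta^2)$, which the paper gets from the symmetry of $\chi_{R,L}$ through the second-order cosine expansion ($L^2|\btheta|^2\le\eta^3$), and it finishes by taking a square root. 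You instead split $f_1=f_1*\chi_{R,L}+(f_1-f_1*\chi_{R,L})$ before applying Cauchy--Schwarz. The smoothed piece gives exactly $\|f_1\|_{U^1(R,L)}$ by $\ell^2$-contractivity of convolution with $\sigma_\lambda$. The remainder only needs the product bound $|1-\hat\chi_{R,L}|\,|\hat\sigma_\lambda(r\,\cdot)|\lesssim\eta$, and for that a first-order Lipschitz estimate on $\hat\chi_{R,L}$ near $R^{-1}\Z^d$ suffices, with no symmetry needed. The paper's route packages everything into one clean multiplier inequality. Yours yields the linear bound directly and uses only first-order information about $\hat\chi_{R,L}$.

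A few small corrections. First, $\hat\chi_{R,L}(\mathbf a/R)=m_{R,L}=1+O_d(R/L)$, not exactly $1$. Your major-arc estimate should therefore read $|1-\hat\chi_{R,L}(\boldsymbol\beta)|\lesssim_d R/L+L|\boldsymbol\beta|\lesssim \eta^2+\eta^{3/2}$; this is precisely where $R\le\eta^2L$ enters. Similarly, $|1-\hat\chi_{R,L}|\le 1+m_{R,L}$ rather than $2$. Second, applying Lemma~\ref{lm:magyar} requires $\lambda\ge C\eta^{-4}$. This follows from $\lambda\ge\eta^{-4}L^2\ge\eta^{-8}$ once $\eta$ is small, and for larger $\eta$ the statement is trivial. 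Third, your remarks about $\lambda\le\eta^4N^2$, boundary layers, and the enlarged support of $f_1*\chi_{R,L}$ are unnecessary. All sums run over $\Z^d$ with $f_0,f_1$ extended by zero, and the coefficient of $\|f_1\|_{U^1(R,L)}$ comes out as exactly $1$. Your ``third region'' is not a separate case.
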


\begin{proof}
Choose $c_0$ at least as large as the constant $C(d)$ in Lemma \ref{lm:magyar}.
It is enough to consider $\eta$ sufficiently small, since otherwise the result follows from the trivial bound by $1$.
Extend $f_0$ and $f_1$ by zero outside $Q_N$. By Cauchy-Schwarz and Plancherel,
\begin{align}
\left| \frac{1}{|Q_N|} \sum_{\b x \in\Z^d} 
    f_0(\b x ) A_\lambda^{(r)}f_1(\b x ) \right|^2
&\leq \frac{1}{|Q_N|}   \sum_{\b x \in\Z^d} 
    |A_\lambda^{(r)} f_1(\b x )|^2 \\
&=\frac{1}{|Q_N|} \int_{\T^d}| \hat f_1(\bxi)|^2 
    |\hat\sigma_\lambda(r\bxi)|^2\,d\bxi.
\end{align}
Notice that $R\leq\eta^2L$ and $R\geq1$ imply $L\geq\eta^{-2}$, and hence $\lambda\geq\eta^{-4}L^2\geq\eta^{-8}$.

We claim that, uniformly in $\bxi\in\T^d$,
\begin{align}
    |\hat\sigma_\lambda(r\bxi)|^2
    \leq
    |\hat\chi_{R,L}(\bxi)|^2+O_{d,r}(\eta^2).
    \label{eq:gvn-multiplier-bound}
\end{align}
Set $\rho=\eta^{-1/2}\lambda^{-1/2}$. If
$\dist_{\T^d} \left( r\bxi, q_\eta^{-1} \Z^d \right) > \rho$,
then \eqref{eq:gvn-multiplier-bound} follows directly from Lemma \ref{lm:magyar}.
Otherwise, there exists $\boldsymbol{\ell}\in\Z^d$ such that
$\dist_{\T^d} \left(\bxi, R^{-1}\boldsymbol{\ell} \right) \leq \rho/r.$
Choose $\btheta\in\R^d$ with $\bxi=\boldsymbol{\ell}/R+\btheta$ in $\T^d$ and $|\btheta|\leq\rho/r$. Put
\begin{align}
m_{R,L}
    = \sum_{\b y \in\Z^d}\chi_{R,L}(\b y )
    = 1+O_d\left(\frac{R}{L}\right)
    = 1+O_d(\eta^2).
\end{align}
Since $\chi_{R,L}$ is symmetric and every $\b y $ in its support belongs to $(R\Z)^d$, we have
\begin{align}
\hat\chi_{R,L}(\bxi)
=\sum_{\b y  \in\Z^d} \chi_{R,L}
    (\b y )\cos(2\pi\b y \cdot\btheta)
\geq m_{R,L} - C_dL^2|\btheta|^2m_{R,L}.
\end{align}
Moreover,
$L^2|\btheta|^2 \leq (\eta\lambda)^{-1} r^{-2} L^2
    \leq \eta^3 r^{-2}
    \leq \eta^2. $
Consequently, $|\hat\chi_{R,L}(\bxi)|
    \geq 1-O_{d,r}(\eta^2).$
Since $|\hat\sigma_\lambda(r\bxi)|\leq1$, this proves \eqref{eq:gvn-multiplier-bound}.

Inserting \eqref{eq:gvn-multiplier-bound} into the Plancherel estimate gives
\begin{align}
\left| \frac{1}{|Q_N|} 
    \sum_{\b x \in\Z^d} 
    f_0(\b x )A_\lambda^{(r)} 
    f_1(\b x )\right|^2
&\leq
    \frac{1}{|Q_N|}
    \int_{\T^d}
    |\hat f_1(\bxi)|^2 
    |\hat\chi_{R,L}(\bxi)|^2\,d\bxi \\ & \hspace*{2cm}
    +O_{d,r}(\eta^2)\frac{1}{|Q_N|}\int_{\T^d}|\hat f_1(\bxi)|^2\,d\bxi \\
    &=\|f_1\|_{U^1(R,L)}^2
    +O_{d,r}(\eta^2)\frac{1}{|Q_N|}\sum_{\b x \in\Z^d}|f_1(\b x )|^2 \\
    &\leq \|f_1\|_{U^1(R,L)}^2+O_{d,r}(\eta^2).
\end{align}
\end{proof}

\bigskip 
\section{A Density Increment Argument}
\subsection{Multiple Scales}
We require a finite dense consequence of the Lyall-Magyar density increment argument from \cite{LM2020}. One important point is that a single prescribed dilation cannot give the required counting estimate uniformly over all dense sets and scales. Instead, one fixes a finite collection of dilations and a common base scale $\lambda$.

\begin{prop}\label{prop:dense-lm}
Let $d\geq 5$, $0<\delta<1$, and $0<\varepsilon<1$. There exists $0<\eta=\eta(d,\delta,\varepsilon)<1/2$ such that the following holds. Let $L,N\in\N$ satisfy $\eta^{-2}q_\eta \leq L \leq \eta^4N,$ where $q_\eta=\lcm\{1\leq q\leq c_0\eta^{-2}\}$ and $c_0=c_0(d)$ is the constant in Proposition~\ref{prop:gvn}. Let $B\subseteq Q_N$ satisfy $|B|\geq\delta|Q_N|$, $\alpha=|B|/|Q_N|$ be the relative density of $B$ in $Q_N$ and $f_B=\one_B-\alpha\one_{Q_N}$ the associated balance function. If $\|f_B\|_{U^1(q_\eta,L)}\leq 2\eta$, we have
\begin{align}
\frac{1}{|Q_N|} \sum_{\b x\in\Z^d} \one_B(\b x)
    A_\lambda^{(1)}\one_B(\b x)
& \geq (1-\varepsilon)\delta^2,
\label{eq:dense-lm}
\end{align}
for every integer $\lambda$ satisfying $\eta^{-4}L^2\leq\lambda\leq\eta^4N^2$.\\
Consequently, $\lambda\in D^2(B)$ for every integer
$\lambda$ in this range.
\end{prop}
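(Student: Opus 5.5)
Write $\one_B=\alpha\one_{Q_N}+f_B$ and expand the bilinear form
\begin{align}
\frac{1}{|Q_N|}\sum_{\b x}\one_B(\b x)A^{(1)}_\lambda\one_B(\b x)
&= \frac{\alpha}{|Q_N|}\sum_{\b x}\one_B(\b x)A^{(1)}_\lambda\one_{Q_N}(\b x)
 +\frac{1}{|Q_N|}\sum_{\b x}\one_B(\b x)A^{(1)}_\lambda f_B(\b x).
\end{align}
Each term is then handled separately: a main term from the constant part and an error term controlled by Proposition~\ref{prop:gvn}. At the end, $\eta$ is chosen small in terms of $d,\delta,\varepsilon$.

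\textbf{The error term.} Apply Proposition~\ref{prop:gvn} with $r=1$, $R=q_\eta$, $f_0=\one_B$ and $f_1=f_B$; both functions take values in $[-1,1]$. The hypotheses need checking. We have $R=q_\eta\le\eta^2L$ because $L\ge\eta^{-2}q_\eta$. We have $L\le N$ because $L\le\eta^4N$. The range $\eta^{-4}L^2\le\lambda\le\eta^4N^2$ is exactly the assumed one. The error term is therefore at most $\|f_B\|_{U^1(q_\eta,L)}+O_d(\eta)\le 2\eta+O_d(\eta)$. Since $r=1$ is fixed, the implied constant depends only on $d$.

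\textbf{The main term.} For $\b v\in S_\lambda$ we have $|\b v|\le\sqrt\lambda\le\eta^2N$, so $\b x+\b v\in Q_N$ unless $\b x$ lies within distance $\eta^2N$ of $\partial Q_N$. The number of such boundary points is $O_d(\eta^2|Q_N|)$. Hence
\begin{align}
A^{(1)}_\lambda\one_{Q_N}(\b x)=1 \quad\text{for all } \b x\in B \text{ away from this boundary layer},
\end{align}
and the first term is at least $\alpha\bigl(\alpha-O_d(\eta^2)\bigr)\ge\alpha^2-O_d(\eta^2)$.

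\textbf{Combining.} The left side of \eqref{eq:dense-lm} is at least $\alpha^2-C_d\eta\ge\delta^2-C_d\eta$. Choosing $\eta\le\varepsilon\delta^2/C_d$ (and $\eta<1/2$) gives the bound $(1-\varepsilon)\delta^2$. This $\eta$ depends only on $d,\delta,\varepsilon$.

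\textbf{The consequence.} The left side of \eqref{eq:dense-lm} is positive, so some $\b x\in B$ and $\b v\in S_\lambda$ satisfy $\b x+\b v\in B$. Then $|\b v|^2=\lambda\in D^2(B)$.

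\textbf{Main obstacle.} The only delicate point is checking that the hypotheses of Proposition~\ref{prop:gvn} hold. Once that is done, the $O_{d,r}(\eta)$ term there involves no dependence beyond $d$, because $r=1$. Everything else is bookkeeping of boundary effects.
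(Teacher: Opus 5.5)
Your proposal is correct and follows essentially the same route as the paper. You split $\one_B=\alpha\one_{Q_N}+f_B$, bound the $f_B$ term with Proposition~\ref{prop:gvn} at $r=1$, control the $\one_{Q_N}$ term by a boundary layer of width $\sqrt\lambda\le\eta^2N$, and choose $\eta\lesssim_d\varepsilon\delta^2$; the only cosmetic difference is that you prove a one-sided lower bound, whereas the paper records the two-sided estimate $\alpha^2+O_d(\eta)$.
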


\begin{proof}
Proposition~\ref{prop:gvn}, applied with $r=1$, gives
\begin{align}
\left| \frac{1}{|Q_N|} \sum_{\b x\in\Z^d}
    \one_B(\b x)A_\lambda^{(1)}f_B(\b x) \right|
& \lesssim_d\eta.
\end{align}
Moreover, the only points $\b x\in Q_N$ for which $A_\lambda^{(1)}\one_{Q_N}(\b x)\neq 1$ lie in a boundary layer of thickness at most $\sqrt{\lambda}$. Hence
\begin{align}
\frac{1}{|Q_N|} \sum_{\b x\in\Z^d}
    \one_B(\b x)A_\lambda^{(1)}\one_{Q_N}(\b x)
& =\alpha+O_d\left(\frac{\sqrt{\lambda}}{N}\right)
=\alpha+O_d(\eta^2),
\end{align}
where the last estimate follows from the upper bound in the stated range.
Combining the preceding estimates with $\one_B=\alpha\one_{Q_N}+f_B$, we get 
\begin{align}
\frac{1}{|Q_N|} \sum_{\b x\in\Z^d} 
    \one_B(\b x)A_\lambda^{(1)}\one_B(\b x)
& =\alpha^2+O_d(\eta).
\end{align}
Choose $\eta<1/2$ sufficiently small that the absolute value of the error term is at most $\varepsilon\delta^2$. Since $\alpha\geq\delta$, this yields \eqref{eq:dense-lm}.

It remains to prove the distance conclusion. Since $(1-\varepsilon)\delta^2>0$ and all summands in \eqref{eq:dense-lm} are nonnegative, there exist $\b x\in B$ and $\b{v}\in S_\lambda$ such that $\b y=\b x+\b{v}\in B$. Consequently, $|\b x-\b  y|^2=|\b{v}|^2=\lambda,$ so $\lambda\in D^2(B)$.
\end{proof}

\begin{rmk}
We retain $\varepsilon$ in Proposition~\ref{prop:dense-lm} to record the dependence of the uniformity parameter and the admissible scales on the permitted loss in the counting estimate. In all subsequent applications we take $\varepsilon=1/2$ and suppress dependence on this fixed choice.
\end{rmk}
\smallskip 

It is tempting to hope for a dense finite Lyall-Magyar input with one prescribed dilation: for fixed $d \geq 5$ and $0 < \delta < 1$, there would exist $q, \Lambda_0 \in \N$ and $\zeta, c > 0$, depending only on $d$ and $\delta$, such that, for all sufficiently large $N$, and $B \subseteq Q_N$ with $|B| \geq \delta |Q_N|$ we have
\begin{align}
    \frac{1}{|Q_N|} \sum_{\b x \in \Z^d} \one_B(\b x) A_\lambda^{(q)} \one_B(\b x)
    \geq \zeta \label{eq:false-fixed-dilate-lm}
\end{align}
for every integer $\lambda$ with $\Lambda_0 \leq \lambda \leq c N^2$. The following example rules this out when $0 < \delta < 1/2$, motivating our use below of a finite family of dilations at a common base scale $\lambda$.

This obstruction is consistent with Proposition \ref{prop:dense-lm}, whose conclusion requires the uniformity condition $\|f_B\|_{U^1(q_\eta, L)} \leq 2 \eta$. When this condition fails, the density-increment argument passes to an affine sublattice on which the relative density increases; returning to the original set introduces an additional dilation. Iterating over a finite sequence of scales produces the family of dilations in Lemma \ref{lm:dense-lm-local} and Proposition \ref{prop:dense-lm-supersat}. Likewise, Corollary \ref{maincrlr:prevalent-dilation} selects $q_B \in \mathcal Q$ after $B$ is fixed and requires it to realize only a positive proportion of the admissible scales. The example instead rules out one dilation, chosen independently of $B$, satisfying the counting estimate \eqref{eq:false-fixed-dilate-lm} at every admissible scale.

\begin{example}[A periodic obstruction to one fixed dilation]\label{ex:obstruction}
Fix $d\geq 5$, $0<\delta<1/2$, $q\in\N$, $c>0$, and $\Lambda_0\in\N$. For $\b x \in\Z^d$ and $1\leq i\leq d$, let $r_i(\b x )$ be the unique integer satisfying both $0\leq r_i(\b x )<2q$ and $r_i(\b x )\equiv x_i \pmod {2q}.$
Define
\begin{align}
& \epsilon_i(\b x )
    =\left\lfloor \frac{r_i(\b x )}{q}\right\rfloor\in\{0,1\},
&& \text{and}
& \pi_q(\b x )
    \equiv \sum_{i=1}^d \epsilon_i(\b x ) \pmod 2.
\end{align}
Let $B_N=\{\b x \in Q_N:\pi_q(\b x )\equiv 0 \pmod 2\}.$
Among the $(2q)^d$ residue classes modulo $(2q\Z)^d$, exactly one half satisfy the parity condition  for $B_N$, hence
\begin{align}
    |B_N|=\frac{1}{2}|Q_N|+O_{d,q}(N^{d-1}).
\end{align}
Since $\delta<1/2$, it follows that $|B_N|\geq \delta |Q_N|$ for all sufficiently large $N$. Choose $N$ sufficiently large so that the interval $[\Lambda_0,cN^2]$ contains an odd positive integer $\lambda$. Since $d\geq 5$, the sphere $S_\lambda$ is nonempty.

We claim that no point of $B_N$ has a $qS_\lambda$-neighbor in $B_N$. Indeed, for every $\b x \in\Z^d$ and every $\b{v}\in\Z^d$, we have
$\epsilon_i(\b x +q\b{v})\equiv \epsilon_i(\b x )+v_i \pmod 2$
for each $1\leq i\leq d$. Summing over $i$ gives
$\pi_q(\b x +q\b{v})-\pi_q(\b x )\equiv \sum_{i=1}^d v_i\equiv \sum_{i=1}^d v_i^2\equiv |\b{v}|^2 \pmod 2.$
If $\b{v}\in S_\lambda$ and $\lambda$ is odd, then this gives
$\pi_q(\b x +q\b{v})-\pi_q(\b x )\equiv 1 \pmod 2.$
Thus $\b x $ and $\b x +q\b{v}$ have opposite $\pi_q$-parity. In particular,
$ \one_{B_N}(\b x )\one_{B_N}(\b x +q\b{v})=0$
for every $\b x \in\Z^d$ and every $\b{v}\in S_\lambda$. Therefore,
\begin{align}
    \frac{1}{|Q_N|}\sum_{\b x \in\Z^d}\one_{B_N}(\b x )A_\lambda^{(q)}\one_{B_N}(\b x )&=\frac{1}{|Q_N|\,|S_\lambda|}\sum_{\b x \in\Z^d}\sum_{\b{v}\in S_\lambda}\one_{B_N}(\b x )\one_{B_N}(\b x +q\b{v})
    =0.
\end{align}
Thus no positive constant $\zeta$ can make \eqref{eq:false-fixed-dilate-lm} true uniformly for all dense subsets $B\subseteq Q_N$ when $0<\delta<1/2$.
\end{example}
\smallskip

\subsection{Building up to the General Case}
\begin{lemma} \label{lm:density-incr}
Let $d\geq 5$ and $0<\rho<1$. There exist an integer $r \geq 2$ and constants $\tau>0$, $0<c\leq1$, $K_0\geq1$, $C_0=C_0(d,r)\geq1$, and $\lambda_0\in\N$, depending only on $d$ and $\rho$, with the following property. Let $\mu,N\in\N$ such that $\mu\geq\lambda_0$ and $N\geq K_0\sqrt{\mu}$,
and let $V\subseteq Q_N$ have density
$\alpha=|V|/|Q_N|\geq\rho$.
Then at least one of the following holds:
\begin{enumerate}[label=\textnormal{(\roman*)}]
    \item $\displaystyle \sum_{\b x \in\Z^d}\one_V(\b x )A_\mu^{(1)}\one_V(\b x )
    \geq \frac{\rho ^2}2 |Q_N|$ , \\[-0.2cm]
    \item there exist an integer $N'\geq c\sqrt\mu/r-C_0$, a vector $\b{a}\in\Z^d$, and a set $V'\subseteq Q_{N'}$ such that $\b{a}+rV'\subseteq V$ and $|V'|/|Q_{N'}|\geq\alpha(1+\tau)$.
\end{enumerate}
\end{lemma}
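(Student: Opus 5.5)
We will prove the lemma by a dichotomy based on Proposition~\ref{prop:dense-lm}, applied with $\delta=\rho$ and $\varepsilon=1/2$. That proposition supplies $\eta=\eta(d,\rho)$ and $q_\eta$. We set $r=q_\eta$; if $q_\eta=1$ we enlarge $c_0$ so that $r\geq 2$. Next we choose a scale $L$ adapted to $\mu$, namely $L=\lfloor \eta^2\sqrt{\mu}\rfloor$. Then $\eta^{-4}L^2\leq\mu$. The upper constraint $\mu\leq\eta^4N^2$ is guaranteed by taking $K_0\geq\eta^{-2}$, and the constraint $L\geq\eta^{-2}q_\eta$ is guaranteed by taking $\lambda_0\gtrsim \eta^{-8}q_\eta^2$. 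We also need $L\leq \eta^4 N$, which again follows from $K_0$ large.

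\emph{Case 1: $\|f_V\|_{U^1(q_\eta,L)}\leq 2\eta$.} Here Proposition~\ref{prop:dense-lm} applies with $\lambda=\mu$. Since $\alpha\geq\rho$, it gives
\[
\sum_{\b x}\one_V(\b x)A^{(1)}_\mu\one_V(\b x)\geq \tfrac12\rho^2|Q_N|,
\]
which is alternative (i).

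\emph{Case 2: $\|f_V\|_{U^1(q_\eta,L)}>2\eta$.} This is the standard $L^2$ energy-increment step. By definition,
\[
\frac{1}{|Q_N|}\sum_{\b t}|f_V*\chi_{q_\eta,L}(\b t)|^2>4\eta^2 .
\]
Note that $f_V*\chi_{q_\eta,L}(\b t)$ equals the density of $V$ on the progression box $P_{\b t}=\b t-(q_\eta\Z)^d\cap[-L/2,L/2]^d$ minus $\alpha$ times the density of $Q_N$ there. The quantity $\sum_{\b t}f_V*\chi(\b t)$ equals $\sum f_V\cdot m_{q_\eta,L}=0$. Together with $|f_V*\chi|\leq 1$, this lets us convert the $L^2$ excess into a positive excess. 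Write $g=f_V*\chi$. Then
\[
\sum_{\b t} g_+ = \tfrac12\sum_{\b t}|g| \geq \tfrac12\sum_{\b t}|g|^2 > 2\eta^2|Q_N|.
\]
We must handle the boundary: the $\b t$ for which $P_{\b t}$ is not contained in $Q_N$ contribute at most $O_d(L/N)|Q_N|\leq O(\eta^4)|Q_N|$, and the number of relevant $\b t$ is at most $|Q_{N+L}|\leq 2|Q_N|$. Hence there is a $\b t$ with $P_{\b t}\subseteq Q_N$ and relative density of $V$ on $P_{\b t}$ at least $\alpha+\eta^2/2\geq\alpha(1+\eta^2/2)$. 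This gives $\tau=\eta^2/2$.

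The box $P_{\b t}$ is an affine image $\b a+r Q_{N'}$ of a cube of side $2N'+1\approx L/r$. (If the side-length parity is off we shrink by one layer, costing a negligible density loss absorbed into $\tau$.) We then set $V'=\{\b y\in Q_{N'}:\b a+r\b y\in V\}$. The side satisfies
\[
N'\geq L/(2r)-C_0\geq (\eta^2/2)\sqrt\mu/r - C_0,
\]
so we may take $c=\eta^2/2$. This is alternative (ii).

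The main obstacle is the bookkeeping in Case 2. First, the boundary and edge losses in the convolution, as well as the discrepancy between $m_{q_\eta,L}$ and $1$ (which is $O(\eta^2)$), must stay below the gained $\eta^2$. Handling this may require using $2\eta$ with a slightly smaller increment $\tau=\eta^2/4$, and making $\lambda_0,K_0$ large in terms of $\eta$. Second, the parameters must be chosen in the right order: $\eta$ first, then $r$, and then $\tau,c,K_0,\lambda_0,C_0$, so that all constants depend only on $d$ and $\rho$.
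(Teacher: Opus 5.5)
Your proposal is correct and follows essentially the same route as the paper. You take $r=q_\eta$ and apply Proposition~\ref{prop:dense-lm} at a scale $L\approx \eta^2\sqrt{\mu}$. In the non-uniform case you use $\sum_{\mathbf t} g(\mathbf t)=0$ for $g=f_V*\chi_{r,L}$ to turn the $L^2$ excess into a positive excess, discard the boundary translates, and pull back the progression box $\mathbf t_0-rQ_{N'}$.

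There are two small points to fix, neither affecting the argument:
\begin{itemize}
\item $|g|$ is bounded by the mass $m_{r,L}=1+O_d(r/L)$ of $\chi_{r,L}$, not by $1$; the paper uses $|g|\leq 2$. This only halves your constant, giving $\sum_{\mathbf t} g_+>\eta^2|Q_N|$ instead of $2\eta^2|Q_N|$.
\item No parity adjustment is needed, since $(r\mathbb Z)^d\cap[-L/2,L/2]^d=rQ_{N'}$ exactly with $N'=\lfloor L/(2r)\rfloor$.
\end{itemize}
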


\begin{proof}
Apply Proposition \ref{prop:dense-lm} with $\delta=\rho$. Let $0<\eta<1/2$ be the parameter supplied by that proposition and set $r=q_\eta.$
Choose $K_0\geq\eta^{-2}$ sufficiently large, depending only on $d$ and $\eta$, and choose $\lambda_0$ sufficiently large that all estimates below hold whenever $\mu\geq\lambda_0$. Define
$ L=\lfloor\eta^2\sqrt{\mu}/4\rfloor.$
We have
$ \eta^{-2}r\leq L,$ $\eta^{-4}L^2\leq\mu,$ $\mu\leq\eta^4N^2,$ and $ L\leq \eta^4N,$ where the third inequality follows from $N\geq K_0\sqrt{\mu}$ and $K_0\geq\eta^{-2}$.

Let $ f_V=\one_V-\alpha\one_{Q_N}$ be the balance function of $V$ in $Q_N$.
Suppose first that $ \|f_V\|_{U^1(r,L)}\leq2\eta.$
Proposition \ref{prop:dense-lm} immediately produces the first inequality.

It remains to consider the case  $\|f_V\|_{U^1(r,L)}>2\eta.$
Write $g(\b t )=(f_V*\chi_{r,L})(\b t ).$
Then
\begin{align}
    \sum_{\b t \in\Z^d}|g(\b t )|^2>4\eta^2|Q_N|.
\end{align}
Moreover,
\begin{align}
\sum_{\b t \in\Z^d}g(\b t )
    & = \Bigg(\sum_{\b y \in\Z^d} \chi_{r,L}(\b y )\Bigg)
    \Bigg(\sum_{\b x \in\Z^d}f_V(\b x )\Bigg)
    =0.
\end{align}
The total mass of $\chi_{r,L}$ lies between $1/2$ and $2$, and hence $|g(\b t )|\leq2$. Therefore
\begin{align}
    \sum_{\b t :g(\b t )>0}g(\b t )
    =\frac12\sum_{\b t \in\Z^d}|g(\b t )|
    \geq\frac14\sum_{\b t \in\Z^d}|g(\b t )|^2
    >\eta^2|Q_N|.
\end{align}

Let $\mathcal I$ be the set of $\b t $ for which $\b t -\big((r\Z)^d\cap[-L/2,L/2]^d\big)\subseteq Q_N.$
The support of $g$ outside $\mathcal I$ is contained in a boundary layer of cardinality at most
$ C_d\frac{L}{N}|Q_N|.$
Since $L/N\leq\eta^2/(4K_0)$, choosing $K_0$ sufficiently large gives
\begin{align}
    \sum_{\substack{\b t \notin\mathcal I\\g(\b t )>0}}g(\b t )
    \leq\frac12\eta^2|Q_N|
\qquad \Longrightarrow \qquad 
    \sum_{\substack{\b t \in\mathcal I\\g(\b t )>0}}g(\b t )
    \geq\frac12\eta^2|Q_N|.
\end{align}
Since the support of $g$ has cardinality $O_d(|Q_N|)$, there exists $\b t _0\in\mathcal I$ such that $g(\b t _0)\geq c_d\eta^2$
for some $c_d>0$.

Let $N'$ be the integer for which $\{\b{z}\in\Z^d:r\b{z}\in[-L/2,L/2]^d\}=Q_{N'}.$
Then
\begin{align}
    N'\geq\frac{L}{2r}-O_{d,r}(1)
    \geq\frac{\eta^2}{8r}\sqrt{\mu}-O_{d,r}(1).
    \label{eq:fixed-scale-N-prime}
\end{align}
Define $ W=\{\b{z}\in Q_{N'}:\b t _0-r\b{z}\in V\}.$
Because $\b t _0\in\mathcal I$, all the points $\b t _0-r\b{z}$ with $\b{z}\in Q_{N'}$ belong to $Q_N$. Hence
$g(\b t _0)
    =(r/L)^d\left(|W|-\alpha|Q_{N'}|\right).$
Furthermore,
$(r/L)^d|Q_{N'}|=1+O_d(r/L)$
and since $g(\b t _0)\geq c_d\eta^2$, a further increase of $\lambda_0$ implies
\begin{align}
    \frac{|W|}{|Q_{N'}|}
    \geq\alpha+c_d'\eta^2
    \geq\alpha(1+c_d'\eta^2).
\end{align}
Set $\tau=c_d'\eta^2$ and $c=\eta^2/8$.
Finally, let $V'=-W$ and $\b{a}=\b t _0$. Since $Q_{N'}=-Q_{N'}$, we have $V'\subseteq Q_{N'}$, $\b{a}+rV'\subseteq V,$ and $|V'|/|Q_{N'}|\geq\alpha(1+\tau)$,
while \eqref{eq:fixed-scale-N-prime} gives
$ N'\geq c\sqrt{\mu}/{r}-O_{d,r}(1).$
This proves the alternative condition and concludes our proof.
\end{proof}

The preceding lemma gives a one-step dichotomy at a fixed scale: either the desired spherical count is already large, or one passes to a smaller affine copy on which the density increases by a definite factor. Iterating this alternative over a carefully chosen finite sequence of scales forces the counting outcome to occur before the density can exceed $1$. The next lemma packages this iteration into a local counting statement involving a finite family of dilations.

\begin{lemma}\label{lm:dense-lm-local}
Let $d\geq 5$ and $0<\rho<1$. There exist integers $s>r\geq 2$ and $J\geq 0$, and constants $K\geq 1$, $\kappa>0$, and $\lambda_0\in\N$, depending only on $d$ and $\rho$, with the following property. For every integer $\lambda\geq\lambda_0$, every translate $C=\b t +Q_M$ with $M=\left\lceil Ks^J\sqrt{\lambda}\right\rceil$, and every set $U\subseteq C$ satisfying $|U|\geq\rho|Q_M|$, we have
\begin{align}
\sum_{j=0}^{J} \sum_{\b x \in\Z^d}
    \one_U(\b x )
    A_{s^{2(J-j)}\lambda}^{(r^j)}
    \one_U(\b x )
& \geq \kappa|Q_M|.
\label{eq:dense-lm-local-count}
\end{align}
Consequently, $D^2(U)\cap \big\{ r^{2j}s^{2(J-j)}\lambda \,:\,  0\leq j\leq J \big\} \neq\varnothing$.
\end{lemma}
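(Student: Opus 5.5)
The plan is to iterate the one-step dichotomy of Lemma \ref{lm:density-incr} along a decreasing sequence of scales. At each stage the density-increment alternative will either produce the desired spherical count or raise the density by the factor $1+\tau$. Since density cannot exceed $1$, the counting alternative must occur within a bounded number of steps.

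\textbf{Choice of parameters.} Apply Lemma \ref{lm:density-incr} with the given $\rho$. This supplies $r\geq 2$, $\tau$, $c$, $K_0$, $C_0$ and a threshold $\lambda_0'$. Set $J=\lceil \log(1/\rho)/\log(1+\tau)\rceil$, so that $\rho(1+\tau)^{J+1}>1$. Choose $s>r$ with $cs/r\geq 2K_0$, take $K=K_0$, and take $\lambda_0\geq\lambda_0'$ large enough that $C_0\leq \tfrac12 c\sqrt{\lambda_0}/r$. For $0\leq j\leq J$ put $\mu_j=s^{2(J-j)}\lambda\geq\lambda\geq\lambda_0$.

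\textbf{Iteration.} After translating by $-\b t$, which changes neither the hypotheses nor the left side of \eqref{eq:dense-lm-local-count}, assume $C=Q_M$. Set $N_0=M$, $V_0=U$ and $\b a_0=0$. Inductively we will maintain $V_j\subseteq Q_{N_j}$, $\b a_j\in\Z^d$ with $\b a_j+r^jV_j\subseteq U$, density $|V_j|/|Q_{N_j}|\geq\rho(1+\tau)^j$, and $N_j\geq K_0\sqrt{\mu_j}$. For $j=0$ the last condition is exactly $M\geq Ks^J\sqrt\lambda$.

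Apply Lemma \ref{lm:density-incr} to $V_j$ at scale $\mu_j$. In case (ii), with $V'$ and $\b a$ as produced there, set $V_{j+1}=V'$ and $\b a_{j+1}=\b a_j+r^j\b a$, so that $\b a_{j+1}+r^{j+1}V_{j+1}\subseteq U$. The new size satisfies
\[
N_{j+1}\geq c\sqrt{\mu_j}/r-C_0\geq \tfrac{c s}{2r}\,s^{J-j-1}\sqrt\lambda\geq K_0\sqrt{\mu_{j+1}},
\]
so the induction continues. Because of the choice of $J$, case (ii) cannot occur at every stage $j=0,\dots,J$. Hence case (i) holds at some $j\leq J$.

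\textbf{Transfer back.} Suppose case (i) holds at stage $j$. The map $\b z\mapsto \b a_j+r^j\b z$ is injective and sends a pair $\b z,\b z+\b v\in V_j$ with $\b v\in S_{\mu_j}$ to a pair $\b x,\b x+r^j\b v\in U$. Therefore
\[
\sum_{\b x}\one_U A^{(r^j)}_{\mu_j}\one_U(\b x)\geq \sum_{\b z}\one_{V_j}A^{(1)}_{\mu_j}\one_{V_j}(\b z)\geq \tfrac{\rho^2}{2}|Q_{N_j}|.
\]
It remains to compare $|Q_{N_j}|$ with $|Q_M|$. On one side $N_j\geq K_0\sqrt{\mu_j}\geq K_0\sqrt\lambda$. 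On the other side $M\leq 2Ks^J\sqrt\lambda$. This gives $|Q_{N_j}|\geq (2s^J)^{-d}|Q_M|/C_d$, so we may take $\kappa=\rho^2(2s^J)^{-d}/(2C_d)$, which depends only on $d$ and $\rho$. Since $\mu_j r^{2j}=r^{2j}s^{2(J-j)}\lambda$, the distance conclusion follows from positivity of the corresponding summand.

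\textbf{Main obstacle.} The delicate step is the bookkeeping of scales. The shrinkage $N_{j+1}\approx c\sqrt{\mu_j}/r$ must still dominate $K_0\sqrt{\mu_{j+1}}$. This forces the geometric ratio $s$ between consecutive scales to be large relative to $r/c$, with the additive loss $C_0$ absorbed by $\lambda_0$. We must also check that every constant ($J$, $s$, $\kappa$) depends only on $d$ and $\rho$. This holds because Lemma \ref{lm:density-incr} is applied with the fixed lower density bound $\rho$ at every stage.
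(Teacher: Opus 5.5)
Your proposal is correct and follows essentially the same route as the paper. You choose $J$, $s$ (with $cs\geq 2rK_0$), $K=K_0$ and $\lambda_0$ the same way, iterate Lemma~\ref{lm:density-incr} along the scales $s^{2(J-j)}\lambda$ until the density bound forces the counting alternative, and then pull back through the injective affine map $\mathbf z\mapsto \mathbf a_j+r^j\mathbf z$. The only cosmetic difference is that you compare $|Q_{N_j}|$ with $|Q_M|$ using the inductive invariant $N_j\geq K_0\sqrt{\mu_j}$ rather than the increment-step lower bound, which gives an equivalent (slightly cleaner) $\kappa$.
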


\begin{proof}
By translation invariance, it is enough to consider $C=Q_M$. Apply Lemma \ref{lm:density-incr} with parameters $d$ and  $\rho$, and let $r$, $\tau$, $c$, $K_0$, and $\lambda_1$ be the resulting constants. Fix $C_0=C_0(d,r)\geq1$ such that the side-length conclusion in alternative (ii) of that lemma may always be written as
$N'\geq\frac{c\sqrt{\mu}}{r}-C_0.$
Choose $J\geq0$ so that
\begin{align}
    \rho(1+\tau)^{J+1}>1.
\end{align}
Next choose an integer $s>r$ sufficiently large that $2rK_0\leq cs$,
and set $K=K_0$. Finally, choose $\lambda_0\geq\lambda_1$ sufficiently large that, whenever $\lambda\geq\lambda_0$,
\begin{align}
    C_0\leq\frac{cs}{2r}\sqrt{\lambda}.
    \label{eq:dense-lm-local-lambda0}
\end{align}

Fix $\lambda\geq\lambda_0$, 
and let $U\subseteq Q_M$ have density $\alpha_0=|U|/|Q_M|\geq\rho$. We inductively construct integers $M_j\geq1$, sets $U_j\subseteq Q_{M_j}$, and affine maps $\phi_j(\b{z})=\b{a}_j+r^j\b{z}$
such that $\phi_j(U_j)\subseteq U$,
$\alpha_j:=\frac{|U_j|}{|Q_{M_j}|}\geq\rho(1+\tau)^j$, and 
\begin{align} 
M_j\geq K_0 s^{J-j}\sqrt{\lambda}.
\label{eq:dense-lm-local-induction}
\end{align}
At stage $j=0$, take $M_0=M$, $U_0=U$, and $\phi_0$ to be the identity. Since $K=K_0$, \eqref{eq:dense-lm-local-induction} still holds.

Suppose that the objects at stage $j$, where $0\leq j\leq J$, have been constructed. Since $s^{2(J-j)}\lambda \geq\lambda\geq\lambda_1$ and $M_j\geq K_0s^{J-j}\sqrt{\lambda}$, Lemma \ref{lm:density-incr} applies to $U_j\subseteq Q_{M_j}$. If its counting alternative holds, then
\begin{align}
    \sum_{\b{z}\in\Z^d}
    \one_{U_j}(\b{z})
    A_{s^{2(J-j)}\lambda}^{(1)}
    \one_{U_j}(\b{z})
    \geq
    \frac{\rho^2}2|Q_{M_j}|,
    \label{eq:dense-lm-local-terminal}
\end{align}
and the iteration stops.

Otherwise, the density-increment alternative supplies an integer $M_{j+1}$, a vector $\b{b}_j\in\Z^d$, and a set $U_{j+1}\subseteq Q_{M_{j+1}}$ such that
\begin{align}
U_j & \supseteq \b{b}_j+rU_{j+1},\\
\alpha_{j+1}& \geq\alpha_j(1+\tau), \\
M_{j+1}& \geq\frac{c}{r}s^{J-j}\sqrt{\lambda}-C_0.
\end{align}
Define $\phi_{j+1}(\b{z}) = \phi_j(\b{b}_j+r\b{z}) = \b{a}_{j+1}+r^{j+1}\b{z}.$
Then $\phi_{j+1}(U_{j+1})\subseteq U$, and the required density estimate at stage $j+1$ follows from the preceding inequalities. If $j<J$, then the choices of $s$ and $\lambda_0$ give
\begin{align}
M_{j+1} &\geq \frac{c}{r}s^{J-j}\sqrt{\lambda}-C_0 \\
    & \geq \frac{c}{2r}s^{J-j}\sqrt{\lambda} \\
    & = \frac{cs}{2r}s^{J-j-1}\sqrt{\lambda} \\
    & \geq K_0s^{J-j-1}\sqrt\lambda,
\end{align}
so \eqref{eq:dense-lm-local-induction} continues to hold.

The density-increment alternative cannot occur at every stage $j=0,\ldots,J$, since it would produce a set $U_{J+1}$ satisfying
\begin{align}
\frac{|U_{J+1}|}{|Q_{M_{J+1}}|}
    \geq \rho(1+\tau)^{J+1} > 1,
\end{align}
contradicting the choice of $J$. Thus the counting alternative holds at some stage $0\leq j\leq J$.

For this terminal value of $j$, the affine form of $\phi_j$ and the inclusion $\phi_j(U_j)\subseteq U$ imply, for every $\b{z}\in\Z^d$,
\begin{align}
\one_U(\phi_j(\b{z}))
    A_{s^{2(J-j)}\lambda}^{(r^j)}
    \one_U(\phi_j(\b{z}))
& \geq \one_{U_j}(\b{z})
    A_{s^{2(J-j)}\lambda}^{(1)}
    \one_{U_j}(\b{z}).
\end{align}
Since $\phi_j$ is injective, summing over $\b{z}$ and using \eqref{eq:dense-lm-local-terminal} yields
\begin{align}
\sum_{\b x \in\Z^d} \one_U(\b x )
    A_{s^{2(J-j)}\lambda}^{(r^j)} \one_U(\b x )
& \geq \rho ^2 |Q_{M_j}|/2.
    \label{eq:dense-lm-local-pullback}
\end{align}

It remains to compare $|Q_{M_j}|$ with $|Q_M|$. If $j=0$, then $M_j=M$. If $1\leq j\leq J$, then stage $j$ arose from an increment at stage $j-1$, so \eqref{eq:dense-lm-local-lambda0} gives
\begin{align}
M_j &\geq \frac{c}{r}s^{J-j+1}\sqrt{\lambda}-C_0 \\
    &\geq \frac{c}{2r}s^{J-j+1}\sqrt{\lambda}.
\end{align}
Moreover, $M=\left\lceil Ks^J\sqrt{\lambda}\right\rceil
\leq 2Ks^J\sqrt{\lambda}.$
Hence, with
\begin{align}
\theta = \min\left\{ 1, \frac{c}{4rK}s^{1-J} \right\} >0,
\end{align}
we have $M_j\geq\theta M$ for every possible terminal stage $j$. Since $|Q_T|\simeq_d T^d$ for $T\geq1$, there exists a constant $c_1=c_1(d,\rho)>0$ such that $|Q_{M_j}|\geq c_1|Q_M|.$
All summands in \eqref{eq:dense-lm-local-count} are nonnegative, so \eqref{eq:dense-lm-local-pullback} gives
\begin{align}
\sum_{i=0}^{J}\sum_{\b x \in\Z^d} \one_U(\b x )
    A_{s^{2(J-i)}\lambda}^{(r^i)} \one_U(\b x )
& \geq \rho^2 c_1|Q_M|/2.
\end{align}
Thus \eqref{eq:dense-lm-local-count} holds with $\kappa=\rho^2 c_1/2$. Finally, the $j$th summand counts pairs whose squared distance is
$|r^j\b{v}|^2 = r^{2j}s^{2(J-j)}\lambda,$
for $\b{v}\in S_{s^{2(J-j)}\lambda},$
which proves the distance conclusion. Translating back proves the result for every $C=\b t +Q_M$.
\end{proof}

Lemma \ref{lm:dense-lm-local} provides the required count inside any sufficiently dense cube at the scale dictated by $\lambda$. To obtain a global statement for a dense subset of $Q_N$, we average this local conclusion over translates of such cubes. Since each ordered pair is contained in only a controlled number of translates, the resulting local counts combine to give the following supersaturation estimate.

\begin{prop}\label{prop:dense-lm-supersat}
Let $d\geq 5$ and $0<\delta<1$. There exist integers $s>r\geq 2$ and $J\geq 0$, and constants $\zeta>0$, $c>0$, and $\Lambda_0\in\N$, depending only on $d$ and $\delta$, with the following property. For all sufficiently large $N$, every set $B\subseteq Q_N$ satisfying $|B|\geq\delta|Q_N|$ satisfies
\begin{align}
    \frac{1}{|Q_N|}\sum_{\b x \in\Z^d}\one_B(\b x )\sum_{j=0}^{J}A_{s^{2(J-j)}\lambda}^{(r^j)}\one_B(\b x )\geq\zeta
    \label{eq:dense-lm-supersat}
\end{align}
for every integer $\lambda$ satisfying $\Lambda_0\leq\lambda\leq cN^2.$
Thus, $D^2(B)\cap\left\{r^{2j}s^{2(J-j)}\lambda:0\leq j\leq J\right\}\neq\varnothing$
for every integer $\lambda$ in this range.
\end{prop}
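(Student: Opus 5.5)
We average Lemma \ref{lm:dense-lm-local} over translates of cubes inside $Q_N$. Apply Lemma \ref{lm:dense-lm-local} with $\rho=\delta/2$, obtaining $s>r\geq2$, $J$, $K$, $\kappa$ and $\lambda_0$, and set $\Lambda_0=\lambda_0$. For $\lambda$ in the range, put $M=\lceil Ks^J\sqrt\lambda\rceil$. We choose $c$ small, depending on $d$ and $\delta$, so that $\lambda\leq cN^2$ forces $M\leq \eta N$ for a small $\eta=\eta(d,\delta)$ to be fixed below.

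Let $\mathcal T=\{\b t:\b t+Q_M\subseteq Q_N\}$, so that $|\mathcal T|=(2N-2M+1)^d\simeq_d|Q_N|$. The first step is density averaging. Since every $\b x\in Q_N$ outside a boundary layer of relative size $O_d(M/N)$ lies in exactly $|Q_M|$ cubes $\b t+Q_M$ with $\b t\in\mathcal T$, we get
\begin{align}
\sum_{\b t\in\mathcal T}|B\cap(\b t+Q_M)|\geq |Q_M|\left(|B|-O_d(M/N)|Q_N|\right)\geq \tfrac34\delta|Q_M||Q_N|
\end{align}
once $\eta$ is small. Call $\b t$ \emph{good} if $|B\cap(\b t+Q_M)|\geq\frac{\delta}{2}|Q_M|$. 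Since each term is at most $|Q_M|$, a Markov-type argument gives at least $\frac{\delta}{4}|Q_N|$ good translates, up to the harmless ratio between $|\mathcal T|$ and $|Q_N|$.

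For each good $\b t$, apply Lemma \ref{lm:dense-lm-local} to $U=B\cap(\b t+Q_M)$. Since $\one_B\geq\one_U$ pointwise and all averages are positive operators,
\begin{align}
\sum_{j=0}^J\sum_{\b x}\one_{B\cap(\b t+Q_M)}(\b x)\,\frac{1}{|S_{s^{2(J-j)}\lambda}|}\sum_{\b v\in S_{s^{2(J-j)}\lambda}}\one_{B\cap(\b t+Q_M)}(\b x+r^j\b v)\geq\kappa|Q_M|.
\end{align}
Summing over good $\b t$ gives a lower bound of $\kappa|Q_M|\cdot\frac{\delta}{4}|Q_N|$. On the other side, each pair $(\b x,\b x+r^j\b v)$ is counted for at most $|Q_M|$ translates $\b t$, namely those with $\b x\in\b t+Q_M$. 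Hence the left-hand side, summed over all $\b t$, is at most $|Q_M|\sum_{\b x}\one_B(\b x)\sum_jA^{(r^j)}_{s^{2(J-j)}\lambda}\one_B(\b x)$. Dividing by $|Q_M||Q_N|$ yields \eqref{eq:dense-lm-supersat} with $\zeta=\kappa\delta/4$, up to a $d$-dependent constant. The distance conclusion then follows as in Proposition \ref{prop:dense-lm}: a positive sum of nonnegative terms forces some $\b x,\b x+r^j\b v\in B$.

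The only delicate point is the bookkeeping that makes the constants uniform. Lemma \ref{lm:dense-lm-local} must be applied with $\rho=\delta/2$ rather than $\delta$, so that cubes near the density threshold still qualify. The constraint $M\leq\eta N$ must hold throughout the range, which dictates $c\simeq(\eta/Ks^J)^2$. Since $s$, $J$ and $K$ depend only on $d$ and $\delta$, so do $c$ and $\zeta$, and "$N$ sufficiently large" is needed only to make the range nonempty and the boundary errors small.
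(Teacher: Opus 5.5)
Your proposal is correct and follows the paper's proof: you average Lemma \ref{lm:dense-lm-local} over translated cubes, select good cubes by a Markov-type count, and use that each ordered pair lies in at most $|Q_M|$ cubes, giving $\zeta=\kappa\delta/4$. The only difference is bookkeeping. You use interior translates with $\mathbf{t}+Q_M\subseteq Q_N$ and absorb a boundary layer, which lets you take $\rho=\delta/2$. The paper instead uses all translates meeting $Q_N$, so that every point of $B$ is covered exactly $|Q_M|$ times, but the bound $|\mathcal T|\le 2|Q_N|$ then forces $\rho=\delta/4$.
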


\begin{proof}
Apply Lemma \ref{lm:dense-lm-local} with parameters $d$ and $\rho=\delta/4$. Let $s>r\geq2$, $J\geq0$, $K\geq1$, $\kappa>0$, and $\lambda_1\in\N$ be the resulting constants.

Choose $0<\vartheta<1$, depending only on $d$, sufficiently small that whenever $M\leq\vartheta N$ and $N$ is sufficiently large, the family
\begin{align}
    \mathcal T_{N,M}
    =
    \left\{\b t \in\Z^d:(\b t +Q_M)\cap Q_N\neq\varnothing\right\}
\end{align}
satisfies
\begin{align}
    |Q_N|\leq|\mathcal T_{N,M}|\leq2|Q_N|.
    \label{eq:dense-lm-supersat-translates}
\end{align}
Such a choice is possible since $Q_N\subseteq\mathcal T_{N,M}$ and $|\mathcal T_{N,M}|=(1+O_d(M/N))|Q_N|$ when $M/N$ is small. Let $\Lambda_0=\lambda_1$, and $c=\left(4Ks^J/\vartheta\right)^{-2}$.

Let $N$ be sufficiently large, let $B\subseteq Q_N$ satisfy $|B|\geq\delta|Q_N|$, and let $\Lambda_0\leq\lambda\leq cN^2$. Set $M=\big\lceil Ks^J\sqrt{\lambda}\big\rceil.$
By the definition of $c$, we have $M\leq\vartheta N$ once $N$ is sufficiently large. Write $\mathcal T=\mathcal T_{N,M}$ and, for each $\b t \in\mathcal T$, define
\begin{align}
\alpha_{\b t } : = \frac{|B\cap (\b t +Q_M)|}{|Q_M|}.
\end{align}
Every point of $B$ belongs to exactly $|Q_M|$ translates $\b t +Q_M$, all of which are indexed by elements of $\mathcal T$. Therefore
\begin{align}
\sum_{\b t \in\mathcal T}|B\cap (\b t +Q_M)|
    = |B||Q_M|,
\end{align}
and hence, by \eqref{eq:dense-lm-supersat-translates},
\begin{align}
    \frac{1}{|\mathcal T|}\sum_{\b t \in\mathcal T}\alpha_{\b t }
    =
    \frac{|B|}{|\mathcal T|}
    \geq
    \frac{\delta}{2}.
    \label{eq:dense-lm-supersat-avg-density}
\end{align}

Let
\begin{align}
    \mathcal G
    =
    \left\{\b t \in\mathcal T:\alpha_{\b t }\geq\frac{\delta}{4}\right\}.
\end{align}
Since $0\leq\alpha_{\b t }\leq1$, equation \eqref{eq:dense-lm-supersat-avg-density} gives
\begin{align}
    \frac{\delta}{2}
    &\leq
    \frac{|\mathcal G|}{|\mathcal T|}
    +
    \left(1-\frac{|\mathcal G|}{|\mathcal T|}\right)\frac{\delta}{4},
\end{align}
and therefore
\begin{align}
    |\mathcal G|
    \geq
    \frac{\delta}{4}|\mathcal T|
    \geq
    \frac{\delta}{4}|Q_N|.
    \label{eq:dense-lm-supersat-good-translates}
\end{align}

For every $\b t \in\mathcal G$, the set $U_{\b t }=B\cap (\b t +Q_M)$ has density at least $\rho=\delta/4$ in $(\b t +Q_M)$. Lemma \ref{lm:dense-lm-local} therefore gives
\begin{align}
\sum_{j=0}^{J}\sum_{\b x \in\Z^d}
    \one_{U_{\b t }}(\b x )
    A_{s^{2(J-j)}\lambda}^{(r^j)}
    \one_{U_{\b t }}(\b x )
& \geq \kappa|Q_M|.
\end{align}
Summing over $\b t \in\mathcal G$ yields
\begin{align}
\kappa|\mathcal G||Q_M|
& \leq \sum_{\b t \in\mathcal G}
    \sum_{j=0}^{J}
    \sum_{\b x \in\Z^d}
    \one_{U_{\b t }}(\b x )
    A_{s^{2(J-j)}\lambda}^{(r^j)}
    \one_{U_{\b t }}(\b x ).
    \label{eq:dense-lm-supersat-sum-local}
\end{align}

For any fixed ordered pair $\b x ,\b y \in\Z^d$, the number of translates $\b t +Q_M$ containing both $\b x $ and $\b y $ is at most $|Q_M|$. Expanding the spherical averages in \eqref{eq:dense-lm-supersat-sum-local} therefore gives
\begin{align}
\sum_{\b t \in\mathcal G}
    \sum_{j=0}^{J}
    \sum_{\b x \in\Z^d}
    \one_{U_{\b t }}(\b x )
    A_{s^{2(J-j)}\lambda}^{(r^j)}
    \one_{U_{\b t }}(\b x )
& \leq |Q_M|
    \sum_{j=0}^{J}
    \sum_{\b x \in\Z^d}
    \one_B(\b x )
    A_{s^{2(J-j)}\lambda}^{(r^j)}
    \one_B(\b x ).
\end{align}
Combining this with \eqref{eq:dense-lm-supersat-sum-local}, canceling $|Q_M|$, and using \eqref{eq:dense-lm-supersat-good-translates}, we obtain
\begin{align}
    \frac{1}{|Q_N|}
    \sum_{j=0}^{J}
    \sum_{\b x \in\Z^d}
    \one_B(\b x )
    A_{s^{2(J-j)}\lambda}^{(r^j)}
    \one_B(\b x )
    \geq
    \frac{\kappa\delta}{4},
\end{align}
which is our desired result.

Finally, all summands in \eqref{eq:dense-lm-supersat} are nonnegative, so there exist $0\leq j\leq J$, $\b x \in B$, and $\b{v}\in S_{s^{2(J-j)}\lambda}$ such that $\b y =\b x +r^j\b{v}\in B.$
Consequently,
$|\b x -\b y |^2 = r^{2j}|\b{v}|^2 = r^{2j}s^{2(J-j)}\lambda,$
which proves the distance conclusion.
\end{proof}

\bigskip
\section{Sparse Relative Spherical Counting Input}

We follow Schacht's transference approach for sparse random extremal problems \cite{Schacht16}. In the present setting, the relevant configurations consist of pairs of lattice points whose difference lies on a prescribed discrete sphere, so only the $2$-uniform case of his general framework is needed. The deterministic input is a supersaturation statement showing that every sufficiently dense subset of $Q_N$ contains many such pairs. Schacht's theorem transfers this conclusion to subsets of positive relative density inside the binomial random set $Q_{N,p_N}$, provided that the configurations are sufficiently numerous and that their local counts satisfy an appropriate second-moment bound. The lemma below verifies precisely this boundedness condition in the range of parameters used in our context.

\begin{lemma}\label{lm:graph-bddness}
Let $d \geq 5$, let $0 < \gamma < d - 2$, put $p_N = N^{-\gamma}$, and fix $q \in \N$. For $\lambda \in \N$, let $G_{\lambda,N}^{(q)}$ be the directed graph with vertex set $Q_N$ and edge set
\begin{align}
E_{\lambda,N}^{(q)}
    & = \{(\b{x}, \b{y}) \in Q_N \times Q_N : \b{y} - \b{x} \in q S_\lambda\}.
\end{align}
There are constants $c_0 = c_0(d,q) > 0$, $c_1 = c_1(d,q) > 0$, and $c_2 = c_2(d,q) > 0$ such that, uniformly for all integers $\lambda$ satisfying
\begin{align}
c_1 p_N^{-2/(d - 2)}
    & \leq \lambda
    \leq c_2 N^2,
\end{align}
we have $|S_\lambda| \simeq_d \lambda^{d/2 - 1},$ as well as $|E_{\lambda,N}^{(q)}| \simeq_{d,q} |Q_N| |S_\lambda|, $ and $p_N |S_\lambda| \geq 1$.
Furthermore, for every $p_N \leq \theta \leq 1$, if $Q_{N,\theta}$ denotes the binomial random subset of $Q_N$ with retention probability $\theta$, then
\begin{align}
\mathbb E \sum_{\b{x} \in Q_N} \deg_{G_{\lambda,N}^{(q)}}(\b{x}, Q_{N,\theta})^2
    & \leq c_0 \theta^2 \frac{|E_{\lambda,N}^{(q)}|^2}{|Q_N|}.
\end{align}
\end{lemma}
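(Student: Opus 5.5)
The plan is to establish the three deterministic estimates first and then compute the second moment by expanding the square.

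\emph{Sphere size.} For $d\geq5$, the classical asymptotic for representations by sums of $d$ squares (Hardy--Littlewood, with the singular series bounded above and below by constants depending only on $d$ once $d\geq5$) gives $|S_\lambda|\simeq_d\lambda^{d/2-1}$ for all $\lambda\geq1$. This is also implicit in the Magyar--Stein--Wainger theory cited before Lemma~\ref{lm:magyar}. The bound $p_N|S_\lambda|\geq1$ then follows by taking $c_1$ large: if $\lambda\geq c_1p_N^{-2/(d-2)}$, then $p_N|S_\lambda|\gtrsim_d c_1^{(d-2)/2}\geq1$.

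\emph{Edge count.} We have $|E^{(q)}_{\lambda,N}|\leq|Q_N||S_\lambda|$ trivially. For the lower bound, note that every $\b x\in Q_{N/2}$ has all of its $|S_\lambda|$ neighbours $\b x+q\b v$ inside $Q_N$, provided $q\sqrt{\lambda}\leq N/2$. This holds once $c_2\leq(2q)^{-2}$. Hence $|E|\geq|Q_{N/2}||S_\lambda|\gtrsim_d|Q_N||S_\lambda|$. Since the range requires $c_1p_N^{-2/(d-2)}\leq c_2N^2$, and $p_N^{-2/(d-2)}=N^{2\gamma/(d-2)}=o(N^2)$ because $\gamma<d-2$, the range is nonempty for large $N$.

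\emph{Second moment.} Write $\deg(\b x,Q_{N,\theta})=\sum_{\b y:(\b x,\b y)\in E}\one_{\b y\in Q_{N,\theta}}$. Squaring and taking expectations splits the sum into a diagonal part and an off-diagonal part:
\begin{align}
\mathbb E\deg(\b x,Q_{N,\theta})^2=\theta\deg(\b x)+\theta^2\deg(\b x)(\deg(\b x)-1)\leq\theta\deg(\b x)+\theta^2\deg(\b x)^2.
\end{align}
Summing over $\b x$ and using $\deg(\b x)\leq|S_\lambda|$, the off-diagonal part is at most $\theta^2|Q_N||S_\lambda|^2\lesssim_{d,q}\theta^2|E|^2/|Q_N|$ by the edge count. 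The diagonal part equals $\theta|E|$. Since $\theta\geq p_N$ and $p_N|S_\lambda|\geq1$, we get $\theta|E|\leq\theta^2|S_\lambda||E|\lesssim_{d,q}\theta^2|E|^2/|Q_N|$, again by the edge count. Adding the two parts gives the claim with a suitable $c_0$.

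The only step where the threshold $p_N^{-2/(d-2)}$ enters is this comparison of the diagonal term with the off-diagonal term, and it is exactly where $p_N|S_\lambda|\geq1$ is used. The main care needed is the uniform two-sided sphere count for $d\geq5$, which I would cite rather than reprove.
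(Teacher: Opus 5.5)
Your proposal is correct and follows the paper's proof essentially step for step. Both use the cited two-sided sphere count and the interior-cube lower bound for $|E_{\lambda,N}^{(q)}|$; the paper uses $Q_{\lfloor N/2\rfloor}$ with $c_2\leq 1/(16q^2)$, which matches your $c_2\leq (2q)^{-2}$. Both then split the second moment into diagonal and off-diagonal parts in the same way, with $\theta|S_\lambda|\geq p_N|S_\lambda|\geq 1$ absorbing the diagonal term.
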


\begin{proof}
Observe that the estimate of the size of the discrete sphere is merely the standard estimate for the number of lattice points on discrete spheres, see \cite{MSW2002}.

We write $\deg_{G_{\lambda,N}^{(q)}}(\b{x}, U)$ for the out-degree of $\b{x}$ into $U$ in the directed graph $G_{\lambda,N}^{(q)}$. Choose $c_2 = c_2(d,q) > 0$ sufficiently small so that $q \sqrt{\lambda} \leq N/4$ whenever $\lambda \leq c_2 N^2$. For instance, it is enough to take $c_2 \leq 1/(16 q^2)$. If $\lambda \leq c_2 N^2$, then for every $\b{x} \in Q_{\lfloor N/2 \rfloor}$ and every $\b{v} \in S_\lambda$ we have $\b{x} + q \b{v} \in Q_N$. Indeed, since $\b{x} \in Q_{\lfloor N/2 \rfloor}$ and $q \sqrt{\lambda} \leq N/4$,
\begin{align}
\|\b{x} + q \b{v}\|_\infty
    & \leq \|\b{x}\|_\infty + q \|\b{v}\|_\infty
    \leq \frac{N}{2} + q \sqrt{\lambda}
    \leq \frac{3N}{4}
    \leq N,
\end{align}
and hence $\b{x} + q \b{v} \in Q_N$.

Consequently, $ |E_{\lambda,N}^{(q)}| \geq |Q_{\lfloor N/2 \rfloor}| |S_\lambda| \gtrsim_d |Q_N| |S_\lambda|.$ 
The reverse inequality is immediate, since for each fixed $\b{x} \in Q_N$ there are at most $|S_\lambda|$ possible out-neighbors $\b{x} + q \b{v}$ with $\b{v} \in S_\lambda$. Thus
$|E_{\lambda,N}^{(q)}| \leq |Q_N| |S_\lambda|.$
Combining the two gives $|E_{\lambda,N}^{(q)}| \simeq_{d,q} |Q_N| |S_\lambda|.$

It remains to choose the lower endpoint. When $\lambda \geq c_1 p_N^{-2/(d - 2)}$, then $p_N |S_\lambda| \gtrsim_d p_N \lambda^{d/2 - 1} \geq c_1^{(d - 2)/2}.$
By picking a sufficiently large $c_1 = c_1(d,q)$, we have
$p_N |S_\lambda| \geq 1$
throughout the stated range. Since $\gamma < d - 2$, the lower endpoint in the stated range is $o(N^2)$, so the choice of $c_2$ above is compatible with the range for all sufficiently large $N$.

We now prove the boundedness estimate. For $\b{x} \in Q_N$, set
\begin{align}
\mathcal N_{\lambda,N}^{(q)}(\b{x})
    & = \{\b{y} \in Q_N : \b{y} - \b{x} \in q S_\lambda\}, &
\Delta_{\lambda,N}^{(q)}(\b{x})
    & = |\mathcal N_{\lambda,N}^{(q)}(\b{x})|.
\end{align}
Then
\begin{align}
0
    & \leq \Delta_{\lambda,N}^{(q)}(\b{x})
    \leq |S_\lambda|, &
|E_{\lambda,N}^{(q)}|
    & = \sum_{\b{x} \in Q_N} \Delta_{\lambda,N}^{(q)}(\b{x}).
\end{align}
For $\b{y} \in Q_N$, let $X_{\b{y}} = \one_{Q_{N,\theta}}(\b{y})$. Then the variables $X_{\b{y}}$ are independent Bernoulli variables with mean $\theta$, and for each $\b{x} \in Q_N$,
\begin{align}
\deg_{G_{\lambda,N}^{(q)}}(\b{x}, Q_{N,\theta})
    & = \sum_{\b{y} \in \mathcal N_{\lambda,N}^{(q)}(\b{x})} X_{\b{y}}.
\end{align}
Therefore,
\begin{align}
\mathbb E \deg_{G_{\lambda,N}^{(q)}}(\b{x}, Q_{N,\theta})^2
    & = \theta \Delta_{\lambda,N}^{(q)}(\b{x}) + \theta^2 \Delta_{\lambda,N}^{(q)}(\b{x}) \bigl(\Delta_{\lambda,N}^{(q)}(\b{x}) - 1\bigr) \\
    & \leq \theta \Delta_{\lambda,N}^{(q)}(\b{x}) + \theta^2 \Delta_{\lambda,N}^{(q)}(\b{x})^2.
\end{align}
Summing over $\b{x} \in Q_N$ gives
\begin{align}
\mathbb E \sum_{\b{x} \in Q_N} \deg_{G_{\lambda,N}^{(q)}}(\b{x}, Q_{N,\theta})^2
    & \leq \theta |E_{\lambda,N}^{(q)}| + \theta^2 \sum_{\b{x} \in Q_N} \Delta_{\lambda,N}^{(q)}(\b{x})^2.
\end{align}
Since $\Delta_{\lambda,N}^{(q)}(\b x) \leq |S_\lambda|$ and $|E_{\lambda,N}^{(q)}| \simeq_{d,q} |Q_N| |S_\lambda|.$,
\begin{align}
\sum_{\b{x} \in Q_N} \Delta_{\lambda,N}^{(q)}(\b{x})^2
    & \leq |Q_N| |S_\lambda|^2
    \lesssim_{d,q} \frac{|E_{\lambda,N}^{(q)}|^2}{|Q_N|}.
\end{align}
For the diagonal term, $\theta \geq p_N$, as well as $p_N |S_\lambda| \geq 1$ and $|E_{\lambda,N}^{(q)}| \simeq_{d,q} |Q_N| |S_\lambda|,$ imply
\begin{align}
\theta |E_{\lambda,N}^{(q)}|
    & = \frac{|Q_N|}{\theta |E_{\lambda,N}^{(q)}|} \theta^2 \frac{|E_{\lambda,N}^{(q)}|^2}{|Q_N|} \\
    & \lesssim_{d,q} \frac{1}{\theta |S_\lambda|} \theta^2 \frac{|E_{\lambda,N}^{(q)}|^2}{|Q_N|} \\
    & \lesssim_{d,q} \theta^2 \frac{|E_{\lambda,N}^{(q)}|^2}{|Q_N|}.
\end{align}
Combining the second-moment decomposition above with the last two estimates completes the proof.
\end{proof}

\begin{rmk}\label{rmk:prob-adjust}
The normalization $p_N=N^{-\gamma}$ is inessential. For any fixed constant $a>0$, independent of $N$, the lemma and its proof remain valid with $p_N=aN^{-\gamma}$, provided that $N$ is then taken sufficiently large.
\end{rmk}

\begin{crlr}\label{cor:graph-bddness-cnst}
Let $d\geq 5$, let $0<\gamma<d-2$, put $p_N=N^{-\gamma}$, and fix $q\in\N$ and $a>0$. There are constants $c_0=c_0(d,q)>0$, $c_1=c_1(d,q,a)>0$, and $c_2=c_2(d,q)>0$ such that, uniformly for all integers $\lambda$ satisfying
\begin{align}
c_1p_N^{-2/(d-2)}
    & \leq \lambda
    \leq c_2N^2,
\end{align}
and all $ap_N\leq \theta \leq 1$, we have
\begin{align}
\mathbb E\sum_{\b x \in Q_N}\deg_{G_{\lambda,N}^{(q)}}(\b x ,Q_{N,\theta})^2
    \leq c_0\theta^2\frac{|E_{\lambda,N}^{(q)}|^2}{|Q_N|}.
\end{align}
\end{crlr}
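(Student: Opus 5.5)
The plan is to rerun the proof of Lemma \ref{lm:graph-bddness} with $p_N$ replaced by $a p_N$, as Remark \ref{rmk:prob-adjust} suggests, and to track how the constants depend on $a$. Only the lower endpoint of the $\lambda$-range involves the retention probability, so only $c_1$ should depend on $a$. The constants $c_0$ and $c_2$ come from the edge-count estimate and the off-diagonal second-moment term, neither of which sees $p_N$, so they keep their dependence on $(d,q)$ alone.

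First, fix $c_2=c_2(d,q)$ exactly as in Lemma \ref{lm:graph-bddness}, for instance $c_2\le 1/(16q^2)$. This gives $|E_{\lambda,N}^{(q)}|\simeq_{d,q}|Q_N||S_\lambda|$ and $|S_\lambda|\simeq_d\lambda^{d/2-1}$ for all $\lambda\le c_2N^2$, with no reference to $p_N$.

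Second, the key adjustment: choose $c_1=c_1(d,q,a)$ large enough that $\lambda\ge c_1p_N^{-2/(d-2)}$ forces $a p_N|S_\lambda|\ge 1$. Since $a p_N|S_\lambda|\gtrsim_d a\,c_1^{(d-2)/2}$ on this range, it suffices to take $c_1^{(d-2)/2}\gtrsim_d a^{-1}$. Because $\gamma<d-2$, the lower endpoint is $o(N^2)$, so the range is nonempty and compatible with the upper endpoint for large $N$.

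Third, repeat the second-moment computation for $\theta\ge a p_N$:
\begin{align}
\mathbb E\sum_{\b x\in Q_N}\deg(\b x,Q_{N,\theta})^2
    \le \theta|E_{\lambda,N}^{(q)}|+\theta^2|Q_N||S_\lambda|^2.
\end{align}
The off-diagonal term is $\lesssim_{d,q}\theta^2|E_{\lambda,N}^{(q)}|^2/|Q_N|$, exactly as before. For the diagonal term, write
\begin{align}
\theta|E_{\lambda,N}^{(q)}|\lesssim_{d,q}\frac{1}{\theta|S_\lambda|}\,\theta^2\frac{|E_{\lambda,N}^{(q)}|^2}{|Q_N|},
\end{align}
and use $\theta|S_\lambda|\ge a p_N|S_\lambda|\ge 1$. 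The implied constant in this comparison comes only from $|E_{\lambda,N}^{(q)}|\simeq_{d,q}|Q_N||S_\lambda|$, so $c_0$ depends only on $(d,q)$.

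The only point needing care is the second step. The dependence on $a$ has to be absorbed entirely into $c_1$, so that $c_0$ stays uniform in $a$. Using $\theta|S_\lambda|\ge1$ rather than $p_N|S_\lambda|\ge1$ is what achieves this.
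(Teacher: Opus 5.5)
Your proposal is correct and takes the same route as the paper. The paper applies Lemma \ref{lm:graph-bddness} via Remark \ref{rmk:prob-adjust} with $\widetilde p_N = a p_N$ and absorbs the factor $a^{-2/(d-2)}$ into $c_1$, which is exactly what your re-run of the proof does. Your explicit check that the diagonal term needs only $\theta|S_\lambda|\ge 1$, so that $c_0$ stays independent of $a$, spells out the paper's terse ``the remaining conclusions are unchanged''; this independence is needed later in Proposition \ref{prop:sparse-rel}, where $a=C_{\mathrm{tr}}^{-1}$.
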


\begin{proof}
By Lemma \ref{lm:graph-bddness} and Remark \ref{rmk:prob-adjust} for $\widetilde p_N=ap_N$, the lower endpoint in the range of $\lambda$ becomes $ \widetilde p_N^{-2/(d-2)}=a^{-2/(d-2)}p_N^{-2/(d-2)}, $ and the factor $a^{-2/(d-2)}$ may be absorbed into $c_1$. The remaining conclusions are unchanged.
\end{proof}
\smallskip 

The preceding estimates provide the deterministic density and second-moment inputs required for sparse transference. Since the argument must be applied simultaneously to a family of graphs indexed by the admissible values of $\lambda$, we record a quantitative graph form of Schacht's theorem with uniform constants and an exponentially small failure probability. Its conclusion also preserves a positive proportion of the expected edge count, as required for the subsequent counting estimate.

\begin{lemma}\label{lm:quant-schacht-graphs}
Let $0<\alpha<1$, let $0<\delta<1-\alpha$, let $0<\zeta\leq1$, and let $K\geq1$. There exist constants $C_{\mathrm{tr}}\geq1$, $\xi>0$, and $b>0$, depending only on $\alpha$, $\delta$, $\zeta$, and $K$, with the following property. For each $N$, let $V_N$ be a finite set with $v_N=|V_N|\to\infty$, $\pi_N\in(0,1)$, and $\mathcal H_N$ be a nonempty finite family of simple graphs on $V_N$. For $H\in\mathcal H_N$ and $U\subseteq V_N$, write $e_H(U)=|E(H[U])|$. Assume that, for all sufficiently large $N$, the following conditions hold uniformly for $H\in\mathcal H_N$:
\begin{enumerate}[label=\textnormal{(\roman*)}]
\item  \label{pt:quant-schacht-density}
    every $U\subseteq V_N$ with $|U|\geq\alpha v_N$ satisfies
    $e_H(U)\geq\zeta|E(H)|$
\item \label{pt:quant-schacht-bddness}
    for every $\pi_N\leq\theta\leq1$, if $V_{N,\theta}$ is the binomial random subset of $V_N$ with retention probability $\theta$, then
    \begin{align}
    \mathbb E\sum_{v\in V_N}\deg_H(v,V_{N,\theta})^2
    \leq K\theta^2\frac{|E(H)|^2}{v_N};
    \end{align}
\item \label{pt:quant-schacht-divergence}
    we have $\displaystyle \min_{H\in\mathcal H_N}\pi_N^2|E(H)|\to\infty.$
\end{enumerate}
Let $\omega_N\to\infty$, and $p_N$ satisfy $ C_{\mathrm{tr}}\pi_N\leq p_N\leq1/\omega_N.$
If $\log|\mathcal H_N|=o(p_Nv_N),$
then, for all sufficiently large $N$, with probability at least $1-|\mathcal H_N|2^{-bp_Nv_N}$, the binomial random subset $\Omega_N=V_{N,p_N}$ has the following simultaneous property: for every $H\in\mathcal H_N$ and every $W\subseteq\Omega_N$ such that $|W|\geq(\alpha+\delta)|\Omega_N|,$
we have $e_H(W)\geq\xi p_N^2|E(H)|.$
\end{lemma}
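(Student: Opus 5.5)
This statement is a quantitative graph version of Schacht's transference theorem \cite{Schacht16}, and I would obtain it by rerunning Schacht's argument in the $2$-uniform case. Throughout, I would track that every constant depends only on $(\alpha,\delta,\zeta,K)$, and then take a union bound over $\mathcal H_N$.

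\emph{Reduction to a single graph.} It suffices to show that for each fixed $H\in\mathcal H_N$ the conclusion fails with probability at most $2^{-bp_Nv_N}$, with $b$, $\xi$, $C_{\mathrm{tr}}$ uniform in $H$ and $N$. The union bound then gives failure probability at most $|\mathcal H_N|2^{-bp_Nv_N}$. The hypothesis $\log|\mathcal H_N|=o(p_Nv_N)$ only ensures this bound is nontrivial, so that the union bound does not swamp the single-graph estimate.

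\emph{Single graph: Schacht's induction on $\theta$.} Fix $H$ and follow \cite{Schacht16}. For $\theta\in[\pi_N,1]$, consider the property $\mathcal P(\theta)$: with probability at least $1-2^{-b(\theta)\theta v_N}$, every $W\subseteq V_{N,\theta}$ with $|W|\ge(\alpha+\delta')|V_{N,\theta}|$ spans at least $\xi(\theta)\theta^2|E(H)|$ edges. At $\theta=1$ this is hypothesis \ref{pt:quant-schacht-density}, with $\xi(1)=\zeta$. The step from $\theta$ down to $\theta/L$, for a fixed large integer $L$, realizes $V_{N,\theta}$ as a union of $L$ independent copies of $V_{N,\theta/L}$ (up to negligible multiplicities). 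Schacht's key step then works as follows. If a dense $W$ in the sparse set spanned few edges, one would use the second-moment bound \ref{pt:quant-schacht-bddness} to show that the set of vertices with atypically large degree into the random set is small. The vertices of "many neighbours into $W$" would then be encoded by a small certificate, namely a subset of size $O(\theta v_N/L)$. Counting certificates and applying Janson/Chernoff gives the exponential bound. Hypothesis \ref{pt:quant-schacht-divergence}, $\pi_N^2|E(H)|\to\infty$, guarantees that expected edge counts are large at every level, so the concentration inequalities apply. Finitely many halving steps suffice only in a multiscale sense. I would instead follow Schacht's formulation, which runs the induction over $\log(1/\pi_N)$ steps but loses only constant factors, because the losses in $\delta'$ and $\xi$ are arranged to form a convergent product. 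This is what makes $C_{\mathrm{tr}},\xi,b$ depend only on $(\alpha,\delta,\zeta,K)$.

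\emph{Main obstacle.} The delicate point is extracting uniformity: Schacht's theorem is stated asymptotically for a single sequence of hypergraphs with unspecified constants. I would handle this by a compactness/contradiction argument. Suppose uniform constants did not exist. Then for each candidate choice there would be a sequence $(N_k,H_k)$ satisfying \ref{pt:quant-schacht-density}--\ref{pt:quant-schacht-divergence} with the same $(\alpha,\zeta,K)$ but violating the bound. Diagonalizing produces a single sequence of graphs $H_k$ to which \cite{Schacht16}*{Theorem 3.3} applies, and that theorem yields constants depending only on the parameters in its hypotheses, which is a contradiction. The exponential failure rate $2^{-bp_Nv_N}$ and the edge lower bound $\xi p_N^2|E(H)|$ are both part of Schacht's conclusion, since his proof gives exactly this tail bound. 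The requirement $p_N\le1/\omega_N$ only excludes the dense regime, where the statement follows directly from \ref{pt:quant-schacht-density} together with Chernoff.
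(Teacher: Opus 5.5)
Your outline agrees with the paper's skeleton: a single-graph transference from Schacht with constants uniform in $H$, then a union bound over $\mathcal H_N$, with $\log|\mathcal H_N|=o(p_Nv_N)$ used only to make $|\mathcal H_N|2^{-bp_Nv_N}$ small. You obtain the uniformity of $C_{\mathrm{tr}},\xi,b$ differently. The paper enumerates the pairs $(N,H)$, $H\in\mathcal H_N$, into one sequence and applies Schacht's Lemma~3.4 with $k=i=2$. It then opens that proof to record the constants (the $i=1$ step with density gap $\delta/8$, his Proposition~3.6, and the choices (14)--(19)), so the dependence on $\delta,\zeta,K$ alone is explicit.

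You replace this bookkeeping with a diagonalization. That is legitimate and independent of Schacht's internals, but it gives no explicit constants and needs two repairs.

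First, diagonalize against Lemma~3.4 rather than an asymptotic statement. The black box must already contain the tail $2^{-bq_nv_n}$, with $(C,\xi,b)$ fixed once the sequence is fixed. A conclusion of the form ``probability $1-o(1)$'' cannot contradict a failure probability exceeding $2^{-b_jp_Nv_N}$.

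Second, the contradiction comes from monotonicity, not from the black box ``yielding constants depending only on its hypotheses''; if it did, you would not need compactness at all. Concretely:
\begin{enumerate}
\item For $(C_j,\xi_j,b_j)=(j,1/j,1/j)$, choose a failing pair $(N_j,H_j)$ far enough out in the $j$th counterexample that (i) and (ii) hold there and $\pi_{N_j}^2|E(H_j)|$, $\omega_{N_j}$, $v_{N_j}$ are all at least $j$.
\item Lemma~3.4 applied to this single diagonal sequence gives fixed $(C,\xi,b)$.
\item For large $j$ we have $p_{N_j}\ge j\pi_{N_j}\ge C\pi_{N_j}$. The bad event for $\xi_j\le\xi$ lies inside the bad event for $\xi$, whose probability is at most $2^{-bp_{N_j}v_{N_j}}\le 2^{-b_jp_{N_j}v_{N_j}}$. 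This contradicts the choice of $(N_j,H_j)$.
\end{enumerate}
Indexing counterexamples by pairs $(N,H)$ also makes the threshold in $N$ uniform over $\mathcal H_N$, which is what the paper's enumeration achieves.

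Two remarks in your sketch are inaccurate, though neither is load-bearing. Schacht's proof is not a descent in $\theta$ over $\log(1/\pi_N)$ scales with a convergent product of losses. Lemma~3.4 is proved by induction on $i$, the number of vertices of an edge required to lie in $W$, and for graphs only the single step $i=1\to i=2$ occurs. Since you end up using Schacht as a black box, drop that paragraph rather than rely on it. Also, the dense regime does not follow from (i) plus Chernoff. For fixed $p<\alpha/(\alpha+\delta)$, a set of relative density $\alpha+\delta$ in $V_{N,p}$ can have fewer than $\alpha v_N$ elements, so (i) does not apply. This is moot here because $p_N\le 1/\omega_N$ is assumed.
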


\begin{proof}
This is the case $k=i=2$ of \cite{Schacht16}*{Lemma 3.4}, followed by a union bound. We give the specialization. Enumerate the pairs $(N,H)$ with $H\in\mathcal H_N$ so that the corresponding values of $N$ are nondecreasing, and assign to the term corresponding to $(N,H)$ the baseline probability $\pi_N$, the sampling probability $p_N$, and the auxiliary parameter $\omega_N$. Since each $\mathcal H_N$ is finite, the corresponding values of $N$ tend to infinity along this enumeration. By part \ref{pt:quant-schacht-density}, the resulting graph sequence is $\alpha$-dense in Schacht's sense, with the fixed density witness $\zeta$. In the graph case, the only boundedness condition is the condition with index $i=1$, and it is exactly part \ref{pt:quant-schacht-bddness}. The hypothesis $\pi_N^2|E(H)|\to\infty$ required there follows uniformly from part \ref{pt:quant-schacht-divergence}.

Apply \cite{Schacht16}*{Lemma 3.4} with $k=i=2$, $\beta_{\mathrm S}=1$, and $\gamma_{\mathrm S}=\alpha+\delta$. Then $\beta_{\mathrm S}\gamma_{\mathrm S}=\alpha+\delta$. 
We record the dependence of the constants in this application. For $k = 2$, the proof of \cite{Schacht16}*{Lemma 3.4} uses only the induction step from $i = 1$ to $i = 2$. The induction hypothesis is applied with density gap $\delta / 8$, so its base case requires a density witness at $\alpha + \delta / 64$; part \ref{pt:quant-schacht-density} supplies the same witness $\zeta$. Thus, in Schacht's notation, we may take $\xi' = \delta \zeta / 128$, $b' = (\delta / 8)^3 / 193$, and $C' = 1$. His Proposition 3.6, applied with $\eta = \delta^2 / 16$, gives $\hat b = \delta^2 / 64$. The choices in \cite{Schacht16}*{(14)--(19)} therefore give $C_{\mathrm{tr}}$, $\xi$, and $b$ depending only on $\delta$, $\zeta$, and $K$, which suffices for the dependence stated here. The graph and probability sequences affect only the threshold beyond which the conclusion holds, not these three constants.
For each $H\in\mathcal H_N$, it follows that, with probability at least $1-2^{-bp_Nv_N}$, every $W\subseteq V_{N,p_N}$ with $|W|\geq(\alpha+\delta)|V_{N,p_N}|$ satisfies $e_H(W)\geq\xi p_N^2|E(H)|.$ For $H\in\mathcal H_N$ and $W\subseteq V_N$, Schacht's notation gives
\begin{align}
E_{V_N}^{2}(W)
    & = \left\{ e\in E(H[V_N]): |e\cap W|\geq2 \right\} \\
    & = \left\{ e\in E(H): e\subseteq W \right\} 
    = E(H[W]).
\end{align}
Indeed, $H[V_N]=H$, and, since every edge of $H$ has exactly two vertices, the condition $|e\cap W|\geq2$ is equivalent to $e\subseteq W$. Thus $\left|E_{V_N}^{2}(W)\right| = e_H(W).$

For each $H\in\mathcal H_N$, let $\mathcal B_{N,H}$ be the event that there exists a set $W\subseteq\Omega_N$ satisfying
$|W|\geq(\alpha+\delta)|\Omega_N|$
but
$ e_H(W)<\xi p_N^2|E(H)|.$
Applying \cite{Schacht16}*{Lemma 3.4} with $k=i=2$, $\beta_{\mathrm S}=1$, $\gamma_{\mathrm S}=\alpha+\delta$, and $U=V_N$ gives $\mathbb P(\mathcal B_{N,H}) \leq 2^{-bp_Nv_N}$ for every $H\in\mathcal H_N$, with the same constants $\xi$ and $b$ for all $H$. Therefore, by the union bound,
\begin{align}
\mathbb P\left( \bigcup_{H\in\mathcal H_N}\mathcal B_{N,H} \right)
    &\leq \sum_{H\in\mathcal H_N} \mathbb P(\mathcal B_{N,H}) \\
    &\leq |\mathcal H_N|2^{-bp_Nv_N}.
\end{align}

Since every $H\in\mathcal H_N$ is a simple graph on $v_N$ vertices, $|E(H)| \leq v_N^2/2.$
Hence
\begin{align}
\min_{H\in\mathcal H_N} \pi_N^2|E(H)|
    \leq \frac{(\pi_Nv_N)^2}{2}.
\end{align}
The divergence hypothesis therefore implies $\pi_Nv_N\to\infty$. Since $p_N\geq C_{\mathrm{tr}}\pi_N$, it follows that $p_Nv_N\to\infty$. Moreover,
\begin{align}
|\mathcal H_N|2^{-bp_Nv_N}
    = \exp\left( \left( \frac{\log|\mathcal H_N|}{p_Nv_N}
    - b\log2 \right) p_Nv_N \right).
\end{align}
By the hypothesis $\log|\mathcal H_N|=o(p_Nv_N)$, the last expression tends to zero. Thus the simultaneous conclusion holds with probability at least $1-|\mathcal H_N|2^{-bp_Nv_N}$.
\end{proof}

We now combine the dense supersaturation estimate with the graph boundedness bounds and the quantitative form of Schacht's transference theorem. The resulting proposition transfers the multidilate spherical count to every sufficiently large subset of the random set, uniformly over all admissible values of $\lambda$. This is the principal sparse input needed for the proof of the main theorem.

\begin{prop}\label{prop:sparse-rel}
Let $d\geq 5$, let $0<\gamma<d-2$, and put $p_N=N^{-\gamma}$. Let $\Omega_N\subseteq Q_N$ be the binomial random subset obtained by retaining each point independently with probability $p_N$. For every $0<\rho<1$, there exist integers $s>r\geq2$ and $J\geq0$ depending only on $d$ and $\rho$, constants $c_0=c_0(d,\rho)>0$, $c_1=c_1(d,\rho)>0$, $c_2=c_2(d,\rho)>0$, and $\mathfrak c =\mathfrak c (d,\rho,\gamma)>0$ so that, for all sufficiently large $N$, with probability at least $1-\exp(-\mathfrak c N^{d-\gamma})$, every subset $B\subseteq\Omega_N$ with $|B|\geq\rho p_N|Q_N|$
satisfies
\begin{align}
\frac{1}{|Q_N|} \sum_{\b x \in\Z^d} \one_B(\b x )
    \sum_{j=0}^{J} A_{s^{2(J-j)}\lambda}^{(r^j)} \one_B(\b x )
\geq c_0p_N^2,
\end{align}
for every integer $\lambda$ satisfying $c_1p_N^{-2/(d-2)} \leq \lambda \leq c_2N^2$. Thus, $D^2(B)\cap \big\{ r^{2j}s^{2(J-j)}\lambda \, : \, 0\leq j\leq J \big\} \neq \varnothing$ for every integer $\lambda$ in the allowed range.
\end{prop}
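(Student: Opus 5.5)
The plan is to apply Lemma \ref{lm:quant-schacht-graphs} with $V_N=Q_N$, $v_N=|Q_N|$, and with the family $\mathcal H_N$ made of the graphs encoding the multidilate spherical pairs. The density threshold is $|B|\ge\rho p_N|Q_N|$. By a Chernoff bound, with failure probability $\exp(-cN^{d-\gamma})$ we have $|\Omega_N|\le 2p_N|Q_N|$, so such $B$ satisfies $|B|\ge(\rho/2)|\Omega_N|$. Accordingly I take $\alpha=\rho/4$ and $\delta_{\rm S}=\rho/4$ in Lemma \ref{lm:quant-schacht-graphs}.

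\textbf{Constructing the graphs.} Apply Proposition \ref{prop:dense-lm-supersat} with density $\alpha=\rho/4$. This gives $s>r\ge 2$, $J$, $\zeta$, $c$ and $\Lambda_0$. For each admissible $\lambda$, let $H_\lambda$ be the simple graph on $Q_N$ whose edge set is the union over $j$ of the undirected versions of $E^{(r^j)}_{s^{2(J-j)}\lambda,N}$. This graph is simple because $r^j\b v\neq 0$. The family is $\mathcal H_N=\{H_\lambda\}$, with $O(N^2)$ members, so $\log|\mathcal H_N|=O(\log N)=o(p_N|Q_N|)$ since $\gamma<d$.

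Lemma \ref{lm:graph-bddness} gives $|E(H_\lambda)|\simeq_{d,\rho}|Q_N||S_{\lambda}|$, using $|S_{s^{2(J-j)}\lambda}|\simeq_{d,s,J}|S_\lambda|$. Condition (i) of Lemma \ref{lm:quant-schacht-graphs} then follows from Proposition \ref{prop:dense-lm-supersat}. For $|U|\ge\alpha|Q_N|$, the left side of \eqref{eq:dense-lm-supersat} with $B=U$ counts ordered pairs weighted by $1/|S_{s^{2(J-j)}\lambda}|$. It is therefore at most $C e_{H_\lambda}(U)/(|Q_N||S_\lambda|)$, which gives $e_{H_\lambda}(U)\ge\zeta'|E(H_\lambda)|$. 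This requires $\lambda\ge\Lambda_0$, which holds in the admissible range for large $N$.

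Condition (ii) comes from Corollary \ref{cor:graph-bddness-cnst}. The degree in $H_\lambda$ is at most the sum over $j$ of in-degrees and out-degrees in the directed graphs. By the symmetry of $S_\lambda$ these have the same distribution, so Cauchy--Schwarz with $(2J+2)$ terms reduces the estimate to the directed bounds. Together with $|E(G^{(r^j)})|\lesssim|E(H_\lambda)|$, this gives the estimate with a constant $K=K(d,\rho)$.

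\textbf{Choosing $\pi_N$.} Set $\pi_N=p_N/C_{\rm tr}$ and $a=1/C_{\rm tr}$ in Corollary \ref{cor:graph-bddness-cnst}. Since $C_{\rm tr}$ depends only on $\alpha,\delta_{\rm S},\zeta',K$, and $K$ does not depend on $a$, there is no circularity. Corollary \ref{cor:graph-bddness-cnst} then fixes $c_1$, and I take $c_2$ as the minimum of the $c_2$'s from the corollary (over $q=r^j$) and the constant $c$ from Proposition \ref{prop:dense-lm-supersat}.

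For condition (iii), $\pi_N^2|E(H_\lambda)|\gtrsim p_N^2|Q_N||S_\lambda|\ge p_N|Q_N|\to\infty$, using $p_N|S_\lambda|\ge1$ and $\gamma<d$. The conditions $p_N\ge C_{\rm tr}\pi_N$ and $p_N\le 1/\omega_N$ hold with $\omega_N=N^{\gamma}$, since $p_N\to0$.

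\textbf{Conclusion.} Lemma \ref{lm:quant-schacht-graphs} now gives failure probability at most $|\mathcal H_N|2^{-bp_N|Q_N|}\le\exp(-\mathfrak c N^{d-\gamma})$. Off this event, every $B$ as above satisfies $e_{H_\lambda}(B)\ge\xi p_N^2|E(H_\lambda)|$. Each unordered edge yields at least one ordered pair $(\b x,\b x+r^j\b v)$ with $\b v\in S_{s^{2(J-j)}\lambda}$, the other orientation coming from $-\b v$. Dividing by $|S_{s^{2(J-j)}\lambda}|\lesssim|S_\lambda|$ gives the left side $\ge c_0p_N^2$. Positivity of this sum gives the distance conclusion.

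The main obstacle is the bookkeeping of dependencies. One must check that $C_{\rm tr}$, and hence $c_1$, depends only on $(d,\rho)$, which needs $K$ to be uniform in $a$. One must also verify the uniformity in $\lambda$ of the comparisons $|S_{s^{2k}\lambda}|\simeq|S_\lambda|$ and of the edge counts.
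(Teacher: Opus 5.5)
Your proposal is correct and follows the paper's proof essentially step for step. The shared steps are supersaturation from Proposition \ref{prop:dense-lm-supersat} at density $\rho/4$, the union graph $H_\lambda$, Corollary \ref{cor:graph-bddness-cnst} with factor $C_{\mathrm{tr}}^{-1}$ (whose boundedness constant does not depend on that factor), Lemma \ref{lm:quant-schacht-graphs} with $\alpha=\delta=\rho/4$ over $O(N^2)$ graphs, and a Chernoff bound giving $|B|\geq\frac{\rho}{2}|\Omega_N|$.

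One small bookkeeping fix concerns $c_2$. The corollary and Lemma \ref{lm:graph-bddness} are applied with sphere parameter $s^{2(J-j)}\lambda$, so the upper constant should be $\min_j c_2(d,r^j)\,s^{-2J}$ (and at most $c$), not just the minimum of the $c_2$'s.
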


\begin{proof}
Apply Proposition \ref{prop:dense-lm-supersat} with density parameter $\rho/4$. We obtain integers $s>r\geq2$ and $J\geq0$, and constants $\zeta>0$, $c_{\mathrm d}>0$, and $\Lambda_0\in\N$, depending only on $d$ and  $\rho$, such that every $U\subseteq Q_N$ with $|U|\geq\rho|Q_N|/4$ satisfies
\begin{align}
    \frac{1}{|Q_N|}\sum_{\b x \in\Z^d}\one_U(\b x )\sum_{j=0}^{J}A_{s^{2(J-j)}\lambda}^{(r^j)}\one_U(\b x )\geq\zeta
    \label{eq:sparse-rel-dense-input}
\end{align}
whenever $N$ is sufficiently large and $\Lambda_0\leq\lambda\leq c_{\mathrm d}N^2$.

For $0\leq j\leq J$, define $u_j:=s^{2(J-j)}$, and 
\begin{align}
\mathcal E_{j,\lambda,N}
    & = \left\{(\b x ,\b y )\in Q_N\times Q_N:
        \b y -\b x \in r^jS_{u_j\lambda}\right\}, \\
e_{j,\lambda,N}(U)
    & = \left|\mathcal E_{j,\lambda,N}\cap(U\times U)\right|, \\
M_{\lambda,N}
    & = \sum_{j=0}^{J}|\mathcal E_{j,\lambda,N}|, \\
e_{\lambda,N}(U)
    & = \sum_{j=0}^{J}e_{j,\lambda,N}(U).
\end{align}
By the definition of the spherical averages,
\begin{align}
\frac{1}{|Q_N|} \sum_{\b x \in\Z^d} \one_U(\b x ) 
    \sum_{j=0}^{J} A_{u_j\lambda}^{(r^j)}\one_U(\b x )
= \frac{1}{|Q_N|}\sum_{j=0}^{J}\frac{e_{j,\lambda,N}(U)}{|S_{u_j\lambda}|}.
\label{eq:sparse-rel-counting-identity}
\end{align}

Let $H_{\lambda,N}$ be the simple graph on $Q_N$ in which $\{\b x ,\b y \}$ is an edge when $\b y -\b x \in r^jS_{u_j\lambda}$ for some $0\leq j\leq J$. The squared distance associated with the $j$th relation is
$r^{2j}u_j\lambda = r^{2j}s^{2(J-j)}\lambda.$
These values are pairwise distinct because $s>r$. Hence the constituent edge relations are disjoint, and their symmetry gives
\begin{align}
M_{\lambda,N} & = 2|E(H_{\lambda,N})|,&
e_{\lambda,N}(U) & = 2|E(H_{\lambda,N}[U])|.
\label{eq:sparse-rel-directed-undirected}
\end{align}

For each $0\leq j\leq J$, let $K_j$ be the boundedness constant supplied by Corollary \ref{cor:graph-bddness-cnst} for the dilation $r^j$; this constant is independent of the fixed constant factor in that corollary. Since the numbers $r^j$ and $u_j$ range over fixed finite sets, Lemma \ref{lm:graph-bddness} also supplies constants $\kappa_1,\kappa_2>0$, depending only on $d$, $r$, $s$, and $J$, such that, whenever its size estimates hold for every $j$,
\begin{align}
\kappa_1|Q_N|\max_{0\leq j\leq J}|S_{u_j\lambda}|
    \leq M_{\lambda,N}
    \leq \kappa_2|Q_N|\min_{0\leq j\leq J}|S_{u_j\lambda}|.
\label{eq:sparse-rel-M-comparison}
\end{align}
Set
\begin{align}
\zeta_{\mathrm{gr}}
    = \min\left\{1,\frac{\zeta}{\kappa_2}\right\}, &
K_{\mathrm{gr}}
    = \max\left\{1,4(J+1)\sum_{j=0}^{J}K_j\right\}.
\end{align}
Apply Lemma \ref{lm:quant-schacht-graphs} with $\alpha = \delta = \rho/4$, $\zeta=\zeta_{\mathrm{gr}}$, and $K=K_{\mathrm{gr}}$,
and let $C_{\mathrm{tr}}$, $\xi$, and $b$ be the resulting constants. 

We now choose the constants in the range of $\lambda$. Apply Lemma \ref{lm:graph-bddness} to each dilation $r^j$ and Corollary \ref{cor:graph-bddness-cnst} to each $r^j$ with the fixed factor $C_{\mathrm{tr}}^{-1}$.
Since the index set $\{j:0\leq j\leq J\}$ is finite, we may choose $c_1=c_1(d,\rho)>0$ and $c_2=c_2(d,\rho)>0$ such that, for all sufficiently large $N$, whenever
\begin{align}
c_1p_N^{-2/(d-2)}
    & \leq \lambda
    \leq c_2N^2,
\label{eq:sparse-rel-common-range}
\end{align}
we have $\Lambda_0\leq\lambda\leq c_{\mathrm d}N^2$, the comparison \eqref{eq:sparse-rel-M-comparison} holds, and, for every $0\leq j\leq J$ and every ${p_N}/{C_{\mathrm{tr}}}\leq\theta\leq1$,
\begin{align}
\mathbb E\sum_{\b x \in Q_N}
    \deg_{G_{u_j\lambda,N}^{(r^j)}}(\b x ,Q_{N,\theta})^2
\leq K_j\theta^2\frac{|\mathcal E_{j,\lambda,N}|^2}{|Q_N|}.
\label{eq:sparse-rel-individual-bddness}
\end{align}
The constants can also be chosen so that, uniformly in this range,
\begin{align}
|S_{u_j\lambda}| \simeq_{d,r,s,J} \lambda^{d/2-1}
\label{eq:sparse-rel-sphere-comparison}
\end{align}
for every $0\leq j\leq J$. The interval in \eqref{eq:sparse-rel-common-range} is nonempty for all sufficiently large $N$, since
\begin{align}
p_N^{-2/(d-2)}
    & = N^{2\gamma/(d-2)}
    = o(N^2).
\end{align}

Discarding finitely many values of $N$, define
\begin{align}
\mathcal H_N
    = \left\{H_{\lambda,N}:\lambda\in\N
        \text{ satisfies \eqref{eq:sparse-rel-common-range}}\right\}.
\end{align}
We verify the hypotheses of Lemma \ref{lm:quant-schacht-graphs} for this family. First, let $U\subseteq Q_N$ satisfy $|U|\geq\rho|Q_N|/4$. By \eqref{eq:sparse-rel-dense-input}, \eqref{eq:sparse-rel-counting-identity}, and \eqref{eq:sparse-rel-M-comparison},
\begin{align}
e_{\lambda,N}(U)
    & \geq \zeta|Q_N| \min_{0\leq j\leq J} |S_{u_j\lambda}| \\
    & \geq \frac{\zeta}{\kappa_2} M_{\lambda,N}.
\end{align}
Using \eqref{eq:sparse-rel-directed-undirected}, we obtain $|E(H_{\lambda,N}[U])| \geq \zeta_{\mathrm{gr}}|E(H_{\lambda,N})|.$

Second, for every $W\subseteq Q_N$ we have
\begin{align}
\deg_{H_{\lambda,N}}(\b x ,W)
    = \sum_{j=0}^{J}\deg_{G_{u_j\lambda,N}^{(r^j)}}(\b x ,W).
\end{align}
Consequently, for every ${p_N}/{C_{\mathrm{tr}}} \leq\theta\leq1$, Cauchy's inequality and \eqref{eq:sparse-rel-individual-bddness} give
\begin{align}
\mathbb E\sum_{\b x \in Q_N}\deg_{H_{\lambda,N}}(\b x ,Q_{N,\theta})^2
    & \leq (J+1) \sum_{j=0}^{J} K_j \theta^2  
        \frac{|\mathcal E_{j,\lambda,N}|^2}{|Q_N|} \\
    & \leq K_{\mathrm{gr}} \theta^2 \frac{|E(H_{\lambda,N})|^2}{|Q_N|}.
\end{align}

Third, \eqref{eq:sparse-rel-M-comparison}, \eqref{eq:sparse-rel-sphere-comparison}, and the lower bound in \eqref{eq:sparse-rel-common-range} imply, uniformly in admissible $\lambda$,
\begin{align}
\left(\frac{p_N}{C_{\mathrm{tr}}}\right)^2|E(H_{\lambda,N})|
    & \gtrsim_{d,\rho} p_N^2|Q_N|\lambda^{(d-2)/2} \\
    & = \bigl(p_N|Q_N|\bigr)\bigl(p_N\lambda^{(d-2)/2}\bigr) \\
    & \gtrsim_{d,\rho} N^{d-\gamma}
    \longrightarrow \infty.
\end{align}
Finally, the family contains at most $c_2N^2+1$ graphs, and hence
$\log|\mathcal H_N| = O(\log N) = o(p_N|Q_N|).$
Taking $\omega_N=p_N^{-1/2}$, we have
\begin{align}
C_{\mathrm{tr}}\frac{p_N}{C_{\mathrm{tr}}}
    = p_N
    \leq p_N^{1/2}
    = \frac{1}{\omega_N}
\end{align}
for all sufficiently large $N$. Lemma \ref{lm:quant-schacht-graphs} therefore gives, with probability at least $1-(c_2N^2+1)2^{-bp_N|Q_N|}$, simultaneously for every admissible $\lambda$, the implication
\begin{align}
W\subseteq\Omega_N, \quad 
|W|\geq\frac{\rho}{2}|\Omega_N| \quad 
\Longrightarrow \quad  
|E(H_{\lambda,N}[W])|
    \geq \xi p_N^2|E(H_{\lambda,N})|.
\label{eq:sparse-rel-transferred-edges}
\end{align}

Chernoff's inequality gives
\begin{align}
\mathbb P\left(|\Omega_N|>2p_N|Q_N|\right)
    \leq \exp\left(-\frac{1}{3}p_N|Q_N|\right),
\end{align}
because $p_N|Q_N|\simeq_d N^{d-\gamma}\to\infty$. Since $\log N=o(p_N|Q_N|)$, there exists $\mathfrak c =\mathfrak c (d,\rho,\gamma)>0$ such that the intersection of the event in \eqref{eq:sparse-rel-transferred-edges} and $\{|\Omega_N|\leq 2p_N|Q_N|\} $ has probability at least $1-\exp(-\mathfrak c N^{d-\gamma})$ for all sufficiently large $N$. On this intersection, let $B\subseteq\Omega_N$ satisfy $|B|\geq\rho p_N|Q_N|$. Then $|B|\geq\frac{\rho}{2}|\Omega_N|,$
so \eqref{eq:sparse-rel-transferred-edges} applies to $B$ for every admissible $\lambda$. Using \eqref{eq:sparse-rel-counting-identity}, \eqref{eq:sparse-rel-directed-undirected}, \eqref{eq:sparse-rel-transferred-edges}, and the lower estimate in \eqref{eq:sparse-rel-M-comparison}, we obtain
\begin{align}
\frac{1}{|Q_N|} \sum_{\b x \in\Z^d} \one_B(\b x ) 
        \sum_{j=0}^{J} A_{u_j\lambda}^{(r^j)} \one_B(\b x )
    & \geq \frac{e_{\lambda,N}(B)}{|Q_N|
        \max_{0\leq j\leq J}|S_{u_j\lambda}|} \\
    & = \frac{2|E(H_{\lambda,N}[B])|}{|Q_N|
        \max_{0\leq j\leq J}|S_{u_j\lambda}|} \\
    & \geq \xi p_N^2\frac{M_{\lambda,N}}{|Q_N|
        \max_{0\leq j\leq J}|S_{u_j\lambda}|} \\
    & \geq \xi\kappa_1p_N^2.
\end{align}
Thus the desired estimate holds with $c_0=\xi\kappa_1$.

Finally, the left-hand side of the preceding estimate is positive and all its summands are nonnegative. Hence there exist $0\leq j\leq J$, $\b x \in B$, and $\b{v}\in S_{s^{2(J-j)}\lambda}$ such that $\b y =\b x +r^j\b{v}\in B$. Thus
$|\b x -\b y |^2 = r^{2j}|\b{v}|^2 = r^{2j}s^{2(J-j)}\lambda,$
which proves the distance conclusion. The dependencies of $c_0$, $c_1$, $c_2$, and $\mathfrak c $ are exactly those stated in the proposition.
\end{proof}

\section{Proof of the Main Results}
Using the ingredients provided in the previous sections, we are now ready to state the proofs of our main results.

\begin{proof}[Proof of Theorem \ref{mainthm:multi-count}]
Apply Proposition \ref{prop:sparse-rel} with density parameter $\rho=\delta/2$. We obtain integers $s>r\geq2$ and $J\geq0$, and constants $c_0>0$, $c_1>0$, $c_2>0$, and $\widetilde c >0$, with the dependencies stated in the theorem, such that, with probability at least $1-\exp(-\widetilde c N^{d-\gamma})$, every set $B\subseteq\Omega_N$ satisfying $ |B|\geq\rho p_N|Q_N|$ obeys \eqref{eq:thm-multi-count} for every integer $\lambda$ in the stated range.

Since $|Q_N|\simeq_d N^d$ and $p_N=N^{-\gamma}$, we have $ p_N|Q_N|\simeq_d N^{d-\gamma}\longrightarrow\infty.$ Therefore Chernoff's inequality gives
\begin{align}
\mathbb P\left( |\Omega_N|<\frac12p_N|Q_N| \right)
    \leq \exp\left(-\frac18p_N|Q_N|\right)
    \leq \exp\left(-c_{\mathrm{ch}}N^{d-\gamma}\right)
\end{align}
for some constant $c_{\mathrm{ch}}=c_{\mathrm{ch}}(d)>0$. Hence, after setting $\mathfrak c =\frac12\min\{\widetilde c ,c_{\mathrm{ch}}\}$ and taking $N$ sufficiently large, with probability at least $1-\exp(-\mathfrak c N^{d-\gamma})$, both the conclusion of Proposition \ref{prop:sparse-rel} and the estimate $ |\Omega_N|\geq p_N|Q_N| /2$ hold simultaneously.

On this event, let $B\subseteq\Omega_N$ satisfy $|B|\geq\delta|\Omega_N|$. Then
\begin{align}
|B| \geq \delta|\Omega_N|
    \geq \frac{\delta}{2}p_N|Q_N|
    = \rho p_N|Q_N|.
\end{align}
Proposition \ref{prop:sparse-rel} therefore applies to $B$ and yields the desired inequality simultaneously for every integer $\lambda$ in the stated range.

It remains to establish the final optimality assertion. First, the admissible interval for $\lambda$ contains an integer for all sufficiently large $N$. Indeed, since $p_N = N^{-\gamma}$ and $\gamma < d - 2$, 
\begin{align}
\frac{c_1 p_N^{-2/(d-2)}}{c_2 N^2}
    & = \frac{c_1}{c_2} N^{-2(d-2-\gamma)/(d-2)}
    \longrightarrow 0.
\end{align}
Since the upper endpoint tends to infinity, the length of the admissible interval therefore tends to infinity. Fix an admissible integer $\lambda_N$ for each sufficiently large $N$.

Suppose that $\Omega_N = \varnothing$, and take $B = \varnothing$. Then $|B| = 0 = \delta |\Omega_N|$, so $B$ satisfies the required relative-density condition. However,
\begin{align}
\frac{1}{|Q_N|} \sum_{\b{x} \in \Z^d} 
\one_B(\b{x}) \sum_{j=0}^{J} A_{s^{2(J-j)}
\lambda_N}^{(r^j)} \one_B(\b{x})
    & = 0  < c_0 p_N^2.
\end{align}
Thus the conclusion of the theorem fails whenever $\Omega_N = \varnothing$, and hence
\begin{align}
\rho_N
    & \geq \mathbb P\left(\Omega_N = \varnothing\right) 
      = (1 - p_N)^{|Q_N|}.
\end{align}
In particular, $\rho_N > 0$. Since $p_N \longrightarrow 0$ and $|Q_N| = (2N + 1)^d$, we have
\begin{align}
-\log \mathbb P\left(\Omega_N = \varnothing\right)
    &= -|Q_N|\log(1 - p_N) \\
    &= |Q_N|\left(p_N + O(p_N^2)\right) \\
    &= (1 + o(1))p_N |Q_N| \\
    &= (2^d + o(1))N^{d-\gamma}.
\end{align}
Consequently, $-\log \rho_N \leq (2^d + o(1))N^{d-\gamma}.$

On the other hand, the already established  probability estimate  
$\rho_N \leq \exp\left(-\mathfrak c N^{d-\gamma}\right), $
means that $-\log \rho_N \geq \mathfrak c N^{d-\gamma}.$
Combining the preceding two estimates, we obtain
\begin{align}
\mathfrak c N^{d-\gamma}
    &\leq -\log \rho_N \leq (2^d + o(1))N^{d-\gamma}.
\end{align}
Note that the upper bound on $\rho_N$ shows that $\rho_N < 1$, and so $|\log \rho_N| = -\log \rho_N  \simeq_{d,\delta,\gamma} N^{d-\gamma},$
which proves that the failure-probability estimate is optimal at the exponential scale.
\end{proof}
\bigskip 

\begin{proof}[Proof of Corollary \ref{maincrlr:multi-distances}]
Since the right-hand side of \eqref{eq:thm-multi-count} is positive and every summand on the left-hand side is nonnegative, there exist an index $0\leq j\leq J$, a point $\b x \in B$, and a vector $\b{v}\in S_{s^{2(J-j)}\lambda}$ such that $\b y=\b x+r^j\b{v}\in B.$ Consequently,
\begin{align}
|\b x-\b y|^2
    = r^{2j}|\b{v}|^2
    = r^{2j}s^{2(J-j)}\lambda
    = \left(r^js^{J-j}\right)^2\lambda.
\end{align}
Thus $D^2(B)\cap \left\{ r^{2j}s^{2(J-j)}\lambda: 0\leq j\leq J \right\} \neq\varnothing,$ which completes the proof.
\end{proof}
\bigskip 

\begin{proof}[Proof of Proposition \ref{prop:lower-sharp}]
Set $a_N = p_N \lambda_N^{(d - 2)/2}$, $\eta_N = a_N^{1/2}$, and $\varepsilon_N = 2\eta_N$. The assumption on $\lambda_N$ gives
$a_N\to 0.$
Let $\mathcal A_N$ denote the event that there exists a set $B_N \subseteq \Omega_N$ satisfying
$|B_N| \geq (1 - \varepsilon_N)|\Omega_N|$ and
$D^2(B_N) \cap \{ r^{2j}s^{2(J - j)}\lambda_N : 0 \leq j \leq J \} = \varnothing.$
We shall prove that there exist constants $C_{\mathrm{sh}} = C_{\mathrm{sh}}(d,r,s,J) > 0$ and $c_{\mathrm{sh}} > 0$ such that, for all sufficiently large $N$,
\begin{align}
\mathbb P(\mathcal A_N)
    &\geq
    1 - C_{\mathrm{sh}}a_N^{1/2}
    - \exp\left(-c_{\mathrm{sh}}N^{d - \gamma}\right).
\end{align}

For each $0 \leq j \leq J$, put $q_j = r^j s^{J - j}$. Let $H_N$ be the simple graph with vertex set $Q_N$ in which two distinct vertices $\b{x},\b{y} \in Q_N$ are adjacent whenever $|\b{x} - \b{y}|^2 = q_j^2\lambda_N$ for at least one $0 \leq j \leq J$.

The standard upper bound for the number of lattice points on a discrete sphere gives, uniformly for $0 \leq j \leq J$,
$|S_{q_j^2\lambda_N}| \lesssim_{d,r,s,J} \lambda_N^{d/2 - 1}.$
Hence, for every $\b{x} \in Q_N$,
\begin{align}
\deg_{H_N}(\b{x})
    &\leq \sum_{j = 0}^{J}|S_{q_j^2\lambda_N}| 
    \lesssim_{d,r,s,J} \lambda_N^{d/2 - 1}.
\end{align}
It follows that
\begin{align}
|E(H_N)|
    &= \frac{1}{2} \sum_{\b{x} \in Q_N} \deg_{H_N}(\b{x}) 
    \lesssim_{d,r,s,J} |Q_N|\lambda_N^{d/2 - 1}.
\end{align}

Let $X_N = |E(H_N[\Omega_N])|$ be the number of edges of $H_N$ whose two endpoints both belong to $\Omega_N$. Since the two endpoints of each edge are retained independently,
\begin{align}
\mathbb E X_N
    &= p_N^2|E(H_N)| \\
    &\lesssim_{d,r,s,J} p_N^2|Q_N|\lambda_N^{d/2 - 1} \\
    &= p_N\lambda_N^{(d - 2)/2}p_N|Q_N| \\
    &= a_Np_N|Q_N|.
\end{align}
Markov's inequality therefore gives
\begin{align}
\mathbb P\left(
X_N > \eta_Np_N|Q_N| \right)
    &\leq \frac{\mathbb E X_N} {\eta_Np_N|Q_N|} 
    \lesssim_{d,r,s,J} \frac{a_N}{\eta_N} 
    = \eta_N.
\end{align}

Moreover, $p_N|Q_N| = N^{-\gamma}(2N + 1)^d \to \infty.$
Chernoff's inequality gives
\begin{align}
\mathbb P\left( |\Omega_N| < \frac{1}{2}p_N|Q_N| \right)
    &\leq \exp\left(-\frac{1}{8}p_N|Q_N|\right)  
    \leq \exp\left(-c_{\mathrm{sh}}N^{d - \gamma}\right)
\end{align}
for some absolute constant $c_{\mathrm{sh}} > 0$.

Let $\mathcal G_N$ be the intersection of the events
$X_N \leq \eta_Np_N|Q_N|$ and
$|\Omega_N| \geq \frac{1}{2}p_N|Q_N|$.
Using the union bound, we get
$\mathbb P(\mathcal G_N) \geq 1 - C_{\mathrm{sh}}\eta_N - \exp\left(-c_{\mathrm{sh}}N^{d - \gamma}\right).$

Suppose that $\mathcal G_N$ occurs. Choose one endpoint from every edge of $H_N[\Omega_N]$, and let $R_N$ be the set of all selected endpoints. Then $R_N$ is a vertex cover of $H_N[\Omega_N]$ and
$|R_N| \leq X_N  \leq \eta_Np_N|Q_N|  \leq 2\eta_N|\Omega_N|.$
Define $B_N = \Omega_N \setminus R_N$. Since $R_N$ meets every edge of $H_N[\Omega_N]$, the set $B_N$ is independent in $H_N$. Furthermore, $|B_N| = |\Omega_N| - |R_N|  \geq (1 - 2\eta_N)|\Omega_N|  = (1 - \varepsilon_N)|\Omega_N|.$

Because $B_N$ is independent in $H_N$, no two points of $B_N$ have squared distance $q_j^2\lambda_N$ for any $0 \leq j \leq J$. Since
$q_j^2\lambda_N = r^{2j}s^{2(J - j)}\lambda_N,$
we obtain $D^2(B_N) \cap \{ r^{2j}s^{2(J - j)}\lambda_N : 0 \leq j \leq J \} = \varnothing.$
Thus $\mathcal G_N \subseteq \mathcal A_N$, which proves the claimed probability bound. 

This distance avoidance also makes the corresponding spherical count vanish. Fix $0 \leq j \leq J$. If $\b{x} \in B_N$ and $\b{v} \in S_{s^{2(J - j)}\lambda_N}$, then $|r^j\b{v}|^2 = r^{2j}|\b{v}|^2  = r^{2j}s^{2(J - j)}\lambda_N.$
Therefore $\b{x} + r^j\b{v}$ cannot also belong to $B_N$. Hence, for every $\b{x} \in \Z^d$ and every $0 \leq j \leq J$, $\one_{B_N}(\b{x}) A_{s^{2(J - j)}\lambda_N}^{(r^j)} \one_{B_N}(\b{x}) = 0.$
Summing over $\b{x}$ and $j$ gives
\begin{align}
\frac{1}{|Q_N|} \sum_{\b{x} \in \Z^d} \one_{B_N}(\b{x})
\sum_{j = 0}^{J} A_{s^{2(J - j)}\lambda_N}^{(r^j)}
\one_{B_N}(\b{x}) &= 0.
\end{align}

Since $a_N \to 0$, we have $\varepsilon_N \to 0$. Thus, for every fixed $0 < \delta < 1$, for all sufficiently large $N$, the event $\mathcal A_N$ implies the existence of a set $B_N \subseteq \Omega_N$ satisfying $|B_N| \geq \delta|\Omega_N|$ for which \eqref{eq:thm-multi-count} fails at $\lambda_N$. Hence, if $\mathcal F_N$ denotes this fixed-density failure event, then $\mathcal A_N \subseteq \mathcal F_N$, and therefore $\mathbb P(\mathcal F_N) \geq 1 - C_{\mathrm{sh}}a_N^{1/2} - \exp\left(-c_{\mathrm{sh}}N^{d - \gamma}\right).$

For this fixed-density failure event, the first-moment argument also gives the sharper estimate
\begin{align}
\mathbb P(\mathcal F_N) \geq 1 - C_{\mathrm{sh},\delta}a_N
    - \exp\left(-c_{\mathrm{sh}}N^{d - \gamma}\right)
\end{align}
for some $C_{\mathrm{sh},\delta} = C_{\mathrm{sh},\delta}(d,r,s,J,\delta) > 0$. Indeed, Markov's inequality gives
\begin{align}
\mathbb P\left( X_N > \frac{1 - \delta}{2}p_N|Q_N| \right)
    &\leq \frac{2\mathbb E X_N}
    {(1 - \delta)p_N|Q_N|} 
    \lesssim_{d,r,s,J,\delta}
    a_N.
\end{align}
If simultaneously $|\Omega_N| \geq p_N|Q_N|/2$, then
$X_N \leq (1 - \delta)p_N|Q_N| /2 \leq (1 - \delta) |\Omega_N|.$
Deleting one endpoint from every edge therefore produces an independent set $B_N \subseteq \Omega_N$ satisfying
$|B_N| \geq |\Omega_N| - X_N  \geq \delta|\Omega_N|.$
Combining this observation with the Chernoff bound above proves the claimed probability bound and completes the proof.
\end{proof}
\bigskip

\begin{proof}[Proof of Theorem \ref{mainthm:sparse-random-patterns}]
Let $\mathcal Q$ be the finite set supplied by Corollary \ref{maincrlr:multi-distances}, and put $m=|\mathcal Q|$. Let $F_0=F-\min F=\{f-\min F:f\in F\}$, and put $L=\max F_0$. By van der Waerden's theorem \cite{Waerden27}, there exists an integer $W=W(m,L+1)$ such that every coloring of an interval of $W$ consecutive integers with $m$ colors contains a monochromatic nontrivial arithmetic progression of length $L+1$.

Define the interval of admissible integers by
\begin{align}
I_N = \left\{ \lambda\in\N: c_1p_N^{-2/(d-2)}
    \leq \lambda
    \leq c_2N^2 \right\}.
\end{align}
Since $p_N=N^{-\gamma}$ and $\gamma<d-2$, we have
\begin{align}
 p_N^{-2/(d-2)}
    & = N^{2\gamma/(d-2)}
    = o(N^2).
\end{align}
Consequently, $|I_N|\longrightarrow\infty$ as $N\to\infty$. In particular, $|I_N|\geq W$ for all sufficiently large $N$.

Now work on the event supplied by Corollary \ref{maincrlr:multi-distances}, and fix a subset $B\subseteq\Omega_N$ satisfying $|B|\geq\delta|\Omega_N|$. For every $\lambda\in I_N$, the corollary gives $D^2(B)\cap\left\{q^2\lambda:q\in\mathcal Q\right\}\neq\varnothing.$ Choose, for each $\lambda\in I_N$, one element $q_\lambda \in\mathcal Q$ such that $q_\lambda ^2\lambda\in D^2(B).$ Thus $\lambda\mapsto q_\lambda $ defines an $m$-coloring of $I_N$. Since $|I_N|\geq W$, van der Waerden's theorem provides integers $\lambda_0\in\N$ and $h\geq1$, and an element $q\in\mathcal Q$, such that $\lambda_0, \lambda_0+h, \ldots, \lambda_0+Lh\in I_N$ and $q_{\lambda_0+ih}=q$ for every $0\leq i\leq L$. It follows from the definition of the coloring that
\begin{align}
q^2\lambda_0+q^2hF_0\subseteq D^2(B).
\end{align}
Set $t=q^2h\in\N$ and $a=q^2\lambda_0-t\min F\in\Z$. Since $F_0=F-\min F$, we obtain
\begin{align}
a+tF=q^2\lambda_0+q^2hF_0\subseteq D^2(B).
\end{align}

The event from Corollary \ref{maincrlr:multi-distances} has probability at least $1-\exp(-\mathfrak c N^{d-\gamma})$ and holds simultaneously for every admissible subset $B$, so the same conclusion holds simultaneously for every $B\subseteq\Omega_N$ satisfying $|B|\geq\delta|\Omega_N|$.
\end{proof}
\bigskip

\begin{proof}[Proof of Theorem \ref{mainthm:sparse-random-aps}]
Let $\mathcal Q$ be the finite set supplied by Corollary \ref{maincrlr:multi-distances}, put $m=|\mathcal Q|$, and set $\alpha=m^{-1}$. We first establish the quantitative consequence of Szemerédi's theorem that will be used below. By Szemerédi's theorem \cite{Szemeredi75}, when $k\geq3$, and trivially when $k=2$, there exists an integer $M=M(k,\alpha)\geq k$ such that every subset of $\{0,1,\ldots,M-1\}$ of cardinality at least $\alpha M/4$ contains a nontrivial arithmetic progression of length $k$. We apply the standard averaging argument of Varnavides \cite{Varnavides59} to deduce a quantitative lower bound for the number of such progressions.

Let $I\subseteq\Z$ be an interval of $L$ consecutive integers, and let $A\subseteq I$ satisfy $|A|\geq\alpha L$. By translating, it is enough to consider $I=\{1,\ldots,L\}$. Assume that $L\geq8M/\alpha$, and put $D=\left\lfloor\alpha L/4M \right\rfloor.$
Let $\mathcal P_L$ be the family of all arithmetic progressions of length $M$ of the form $P_{c,r} = \left\{ c+jr: 0\leq j\leq M-1 \right\},$ for $1\leq r\leq D,$ and $1\leq c\leq L-(M-1)r.$
Define the central interval
\begin{align}
I^\circ = \left\{ (M-1)D+1,\ldots,L-(M-1)D \right\}.
\end{align}
Since $2(M-1)D\leq\alpha L/2$, we have $|A\cap I^\circ| \geq \alpha L/2.$
Every element of $I^\circ$ belongs to exactly $MD$ members of $\mathcal P_L$: for every $1\leq r\leq D$ and every $0\leq j\leq M-1$, the choice $c=x-jr$ gives a member $P_{c,r}$ containing $x$. Consequently,
\begin{align}
\sum_{P\in\mathcal P_L}|A\cap P|
    &\geq |A\cap I^\circ|MD \\
    &\geq \frac{\alpha}{2}LMD \\
    &\geq \frac{\alpha M}{2}|\mathcal P_L|,
\end{align}
where the last inequality follows from $|\mathcal P_L|\leq LD$.

Let
\begin{align}
\mathcal G_L = \left\{ P\in\mathcal P_L: |A\cap P|\geq\frac{\alpha M}{4} \right\}.
\end{align}
Using the preceding incidence estimate and the trivial bound $|A\cap P|\leq M$, we obtain
\begin{align}
\frac{\alpha M}{2}|\mathcal P_L|
    & \leq \sum_{P\in\mathcal P_L}|A\cap P| \\
    & \leq M|\mathcal G_L| + \frac{\alpha M}{4} 
        \left( |\mathcal P_L|-|\mathcal G_L| \right) \\
    &\leq M|\mathcal G_L| + \frac{\alpha M}{4}|\mathcal P_L|.
\end{align}
It follows that $|\mathcal G_L| \geq \frac{\alpha}{4}|\mathcal P_L|.$
Moreover, since $(M-1)D\leq\alpha L/4\leq L/4$ and $D\geq\alpha L/(8M)$, we have
\begin{align}
|\mathcal P_L|
    &= \sum_{r=1}^{D} \left( L-(M-1)r \right) \\
    &\geq \frac{3}{4}LD \\
    &\geq \frac{3\alpha}{32M}L^2.
\end{align}
Combining these, we obtain
\begin{align}
|\mathcal G_L|
\geq
\frac{3\alpha^2}{128M}L^2.
\end{align}

For every $P_{c,r}\in\mathcal G_L$, the set $\left\{ j\in\{0,\ldots,M-1\}: c+jr\in A \right\}$ has cardinality at least $\alpha M/4$, and hence contains a nontrivial arithmetic progression of length $k$. Thus $A\cap P_{c,r}$ contains a nontrivial arithmetic progression of length $k$. Choose one such progression for each $P_{c,r}\in\mathcal G_L$.

A fixed nontrivial progression $a,a+b,\ldots,a+(k-1)b$ can be chosen from at most $M^2$ members of $\mathcal P_L$. Indeed, if it arises from $P_{c,r}$, then there exist integers $u$ and $v$ satisfying $0\leq u\leq M-1$ and $1\leq v\leq M-1$ such that $a=c+ur,$ and $b=vr.$ The pair $(u,v)$ determines $r=b/v$, whenever this is an integer, and then determines $c=a-ur$. Hence there are at most $M^2$ possible choices of $P_{c,r}$. It follows from the lower bound on $|\mathcal G_L|$ that $A$ contains at least
\begin{align}
\nu L^2, \qquad
\nu = \frac{3\alpha^2}{128M^3} >0,
\label{eq:varnavides}
\end{align}
distinct nontrivial arithmetic progressions of length $k$. By translation, the same conclusion holds for every interval $I\subseteq\Z$ of length $L$ and every subset $A\subseteq I$ satisfying $|A|\geq\alpha L$.

We now apply \eqref{eq:varnavides} to the admissible scales. Define
\begin{align}
I_N
    & = \left\{ \lambda\in\N: c_1p_N^{-2/(d-2)}
    \leq \lambda
    \leq c_2N^2 \right\}, &
L_N
    & = |I_N|.
\end{align}
Since $p_N=N^{-\gamma}$ and $\gamma<d-2$, we have
\begin{align}
p_N^{-2/(d-2)}
& =
N^{2\gamma/(d-2)}
=
o(N^2).
\end{align}
Consequently, there exists a constant $\kappa=\kappa(d,\delta)>0$ such that $L_N \geq \kappa N^2$
for all sufficiently large $N$.

Work on the event supplied by Corollary \ref{maincrlr:multi-distances}, and fix a subset $B\subseteq\Omega_N$ satisfying $|B|\geq\delta|\Omega_N|$. For each $q\in\mathcal Q$, define
\begin{align}
I_{N,q}(B) = \left\{ \lambda\in I_N: q^2\lambda\in D^2(B) \right\}.
\end{align}
Corollary \ref{maincrlr:multi-distances} gives
\begin{align}
I_N = \bigcup_{q\in\mathcal Q}I_{N,q}(B).
\end{align}
Pigeonholing, we find an element $q_B\in\mathcal Q$ such that
\begin{align}
|I_{N,q_B}(B)|
    \geq \frac{L_N}{|\mathcal Q|}
    = \alpha L_N.
\end{align}
Applying \eqref{eq:varnavides} to $I_{N,q_B}(B)\subseteq I_N$, we obtain at least $\nu L_N^2$ pairs $(u,h)\in\Z^2$ satisfying $u\geq0$, $h\geq1$, and $u+th\in I_{N,q_B}(B)$ for every $0\leq t\leq k-1$. For every such pair, $q_B^2u+tq_B^2h = q_B^2(u+th) \in D^2(B)$ for every $0\leq t\leq k-1$. The map $(u,h) \longmapsto \left( q_B^2u,q_B^2h \right)$ is injective. Since $L_N \geq \kappa N^2$, we have that
\begin{align}
& \Big| \big\{ (a,b)\in\Z^2: a\geq0,\; b\geq1 \text{ and } a+tb\in D^2(B) \text{ for every } 0\leq t\leq k-1 \big\} \Big| \\
& \hspace*{1cm} \geq \nu L_N^2 \\
& \hspace*{1cm} \geq \nu\kappa^2N^4.
\end{align}
Setting $c_k = \nu\kappa^2 >0$ proves the asserted quantitative bound. Since $\alpha=|\mathcal Q|^{-1}$, the integer $M$, the constant $\nu$, and the constant $\kappa$ depend only on $d$, $\delta$, and $k$, so $c_k=c_k(d,\delta)$ has the stated dependence.

The event from Corollary \ref{maincrlr:multi-distances} has probability at least $1-\exp(-\mathfrak c N^{d-\gamma})$ and holds simultaneously for every subset $B\subseteq\Omega_N$ satisfying $|B|\geq\delta|\Omega_N|$. No additional probabilistic exceptional event was introduced, so the same probability lower bound holds for the conclusion of the theorem.
\end{proof}

\begin{proof}[Proof of Corollary \ref{maincrlr:prevalent-dilation}]
Work on the event supplied by Corollary \ref{maincrlr:multi-distances}, and fix a subset $B \subseteq \Omega_N$ satisfying $|B| \geq \delta |\Omega_N|$. For each $q \in \mathcal Q$, define
\begin{align}
I_{N,q}(B)
    &= \left\{ \lambda \in I_N : q^2 \lambda \in D^2(B) \right\}.
\end{align}
Corollary \ref{maincrlr:multi-distances} gives
\begin{align}
I_N
    &= \bigcup_{q \in \mathcal Q} I_{N,q}(B).
\end{align}
Pigeonholing, there exists $q_B \in \mathcal Q$ such that
$|I_{N,q_B}(B)| \geq |I_N| / |\mathcal Q|,$
which proves the first assertion.

The map $\lambda \mapsto q_B^2 \lambda$ is injective, and therefore
$|D^2(B)| \geq |I_{N,q_B}(B)| \geq |I_N| / |\mathcal Q|.$
Since $p_N^{-2/(d - 2)} = N^{2 \gamma/(d - 2)} = o(N^2)$, we have $|I_N| \gtrsim_{d,\delta} N^2$ for all sufficiently large $N$. Hence $|D^2(B)| \gtrsim_{d,\delta} N^2.$
On the other hand, since $B \subseteq Q_N$, every squared distance determined by $B$ is an integer between $0$ and $4 d N^2$. Thus
$|D^2(B)| \leq 4 d N^2 + 1  \lesssim_d N^2.$
Combining the two estimates gives $|D^2(B)| \simeq_{d,\delta} N^2$.

The event from Corollary \ref{maincrlr:multi-distances} has probability at least $1 - \exp(-\mathfrak c N^{d-\gamma})$ and holds simultaneously for every subset $B \subseteq \Omega_N$ satisfying $|B| \geq \delta |\Omega_N|$, so the same probability lower bound holds for the conclusion of the corollary.
\end{proof}

\medskip
\textbf{AI Disclosure}: In preparing this manuscript, the authors used GPT-6 Astra to assist with language editing, syntax, and proofreading of author-written drafts, and to identify potentially relevant literature. The mathematical arguments and results are the authors' own. All AI-generated suggestions were reviewed by the authors, and all literature references identified with AI assistance were independently verified. The authors take full responsibility for the content and accuracy of the manuscript.

\end{document}